\documentclass{article}
\usepackage{amsmath}
\usepackage{amssymb}
\usepackage{amsthm}
\usepackage{enumerate}

\makeatletter
\def\iddots{\mathinner{\mkern1mu\raise\p@
    \hbox{.}\mkern2mu\raise4\p@\hbox{.}\mkern2mu
    \raise7\p@\vbox{\kern7\p@\hbox{.}}\mkern1mu}}
\def\adots{\mathinner{\mkern2mu\raise\p@\hbox{.} 
 \raise7\p@\vbox{\kern7\p@\hbox{.}}\mkern1mu}}
\makeatother

\setlength{\textwidth}{15cm}
\setlength{\oddsidemargin}{0.5cm}
\setlength{\evensidemargin}{0.5cm}

%%%%%%%%
\newtheorem{theo}{Theorem}[section]\newtheorem{pr}[theo]{Proposition}\newtheorem{col}[theo]{Corollary}\theoremstyle{definition}\newtheorem{defn}[theo]{Definition}\newtheorem{remark}[theo]{Remark}\newtheorem{ex}[theo]{Example}

\def\rank{\mathop{\rm rank}\nolimits}\def\dim{\mathop{\rm dim}\nolimits}\def\diag{\mathop{\rm diag}\nolimits}\def\Ker{\mathop{\rm Ker}\nolimits}\def\ad{\mathop{\rm ad}\nolimits}\def\C{\mathop{\mathbb{C}}\nolimits}\def\Z{\mathop{\mathbb{Z}}\nolimits}\def\N{\mathop{\mathbb{N}}\nolimits}\def\g{\mathop{\mathfrak{g}}\nolimits}\def\h{\mathop{\mathfrak{h}}\nolimits}\def\l{\mathop{\mathfrak{l}}\nolimits}\def\L{\mathop{\mathfrak{L}}\nolimits}\def\Hom{\mathop{\rm Hom}\nolimits}\def\sl{\mathop{\mathfrak{sl}}\nolimits}\def\V{\mathop{\mathcal{V}}\nolimits}\def\1{\mathop{\overline {1}}\nolimits}\def\0{\mathop{\overline {0}}\nolimits}\def\Lie{\mathop{\rm Lie}\nolimits}

%\newcommand\bigzerol{\smash{\hbox{\Huge 0}}}             
%\newcommand\bigzerou{\smash{\lower.3ex\hbox{\Huge 0}}} 
%\newcommand\bigstarl{\smash{\hbox{\Huge $*$}}}         
%\newcommand\bigstaru{\smash{\lower.3ex\hbox{\Huge $*$}}} 

%\numberwithin{equation}{section}
%\def\hsymb#1{\mbox{\strut\rlap{\smash{\Huge$#1$}}\quad}}
%\allowdisplaybreaks[4]
%\title{}
%\vspace{50truept}
%\chead{}
%\author{ Nagatoshi Sasano}
%\pagestyle{fancy}
%\lhead[]{}
%\rhead[]{}
%\renewcommand{\headrulewidth}{0pt}
\begin{document}
%\thispagestyle{empty}
%
%\newpage
\begin{center}
{\huge  
A class of Lie algebras who contains a class of Kac-Moody algebras
\footnote{
{\bf 2010 Mathematic Subjects Classification}: Primary 17Bxx 
\\
Keywords and phrases: Kac-Moody algebras, standard pentads, PC Lie algebras}
}
\end{center}
\vspace{50truept}

\begin{center}{Nagatoshi SASANO}\end{center}
\begin{abstract}
V.Kac showed that from a local Lie algebra, we can construct a graded Lie algebra whose local part is the given local Lie algebra.
In other words, using standard pentads, we can embed given Lie algebra and its representation into a larger graded Lie algebra.
As special cases of Lie algebras associated with standard pentads, we have the notion of PC Lie algebras.
Our aim of this paper is to show that the class of PC Lie algebras contains the class of Kac-Moody algebras, that is, to show that the notion of PC Lie algebras is an extension of Kac-Moody algebras.
\end{abstract}

\section {INTRODUCTION}
In this paper, we shall consider a certain class of Lie algebras, the class of ``PC Lie algebras''.
And, we shall compare it with the class of Kac-Moody algebras.
First, let us see a brief history around the theory of PC Lie algebras in this INTRODUCTION.
\subsection {A brief explanation on fundamental results (standard pentads)}
A PC Lie algebra is a Lie algebra which is constructed with a ``standard pentad'' (see Definition \ref {defn;stap} below).
To define the notion of PC Lie algebras, we need the notion of standard pentads.
\par
The motivation of the ``standard pentads theory'' comes from the theory of prehomogeneous vector spaces of parabolic type, which is studied by H.Rubenthaler.
Roughly, a prehomogeneous vector space of parabolic type is a special prehomogeneous vector space which are obtained from a local part of some graded finite-dimensional semisimple Lie algebra.
%In other words, we can understand that prehomogeneous vector spaces of parabolic type are triplets $(\g,\rho,V)$ consist of a reductive Lie algebra $\g$, its representation $\rho $ and its representation space $V$ which can be embedded into some finite dimensional semisimple Lie algebras.
\par 
To consider ``converse problem'' of the preceding theory of prehomogeneous vector space of parabolic type, the author defined the notion of ``standard quadruplet'' in 2014.
Later, the author developed the notion of ``standard quadruplets'' to the notion of ``standard pentads'' in \cite {Sa3}.
%Using the notion of ``standard quadruplets'',  by the author in 2014, we can successed to embed any finite-dimensional reductive Lie algebra and its finite-dimensional completely reducible representation (not necessary prehomogeneous) into some (finite or infinite-dimensional) graded Lie algebra (see Theorem \ref{theo;liepen} below).
%By the way, the prehomogeneity is described by the injectivity of bracket product of graded Lie algebras constructed.
%This means the theory of prehomogeneous vector spaces of parabolic type is extended to the general prehomogeneous vector spaces with reductive algebraic group.
%The notion of ``standard quadruplets'' is developped to the notion of ``standard pentads'', whose precise definition is as the following.
\begin{defn}[\text {standard pentads, see \cite [Definitions 2.1 and 2.2]{Sa3}}]\label {defn;stap}
Let $\g $ be a Lie algebra with non-degenerate and invariant bilinear form $B$, $(\rho ,V)$ a representation\footnote {
In this paper, we consider a representation $\rho $ of $\g $ on $V$ as a linear map $\rho \colon \g\otimes V\rightarrow V$ satisfying $\rho ([a,b]\otimes v)=\rho (a\otimes \rho (b\otimes v))-\rho (b\otimes \rho (a\otimes v))$ for any $a,b\in \g $ and $v\in V$ (cf. \cite [Notation 1.1]{Sa3}).}
 of $\g $, $(\varrho ,\V)$ a $\g $-submodule of $\Hom (\g ,\C)$.
If a pentad $(\g,\rho,V,\V,B)$ satisfies the following conditions, we say that this pentad is {\it a standard pentad}:
\begin{enumerate}
\item {the restriction of the canonical paring $\langle \cdot ,\cdot \rangle \colon V\times \Hom (V,\C )$ to $V\times \V$ is non-degenerate},
\item {there exists a linear map $\Phi _{\rho }\colon V\otimes \V \rightarrow \g $, called the $\Phi $-map of the pentad, satisfying an equation 
$$
B(a,\Phi _{\rho }(v\otimes \phi ))=\langle \rho (a\otimes v),\phi \rangle =-\langle v,\varrho (a\otimes \phi )\rangle 
$$
for any $a\in \g$, $v\in V$, $\phi \in \V$.
}
\end{enumerate}
\end{defn}
From a standard pentad, we can construct a local Lie algebra $\V \oplus \g \oplus V$ canonically.
As well-known, for a given local Lie algebra, there exists a graded Lie algebra whose local part is the given local Lie algebra (proved by V.Kac in \cite [p.1276, Proposition 4]{Kac2}).
%The standard ``quadruplets'' are objects which are obtained by removing $\V$ from standard pentads.
%From a standard pentad (or quadruplet), we can obtain the corresponding Lie algebra.
\begin{theo}[\text {summary of \cite [Theorem 2.15]{Sa3}, \cite [p.125]{Sa4}}]\label{theo;liepen}
Let $(\g,\rho ,V,\V,B)$ be a standard pentad.
Then there exists a graded Lie algebra $L(\g,\rho,V,\V,B)=\bigoplus _{n\in \Z}V_n$ satisfying the followings.
\begin{itemize}
\item {We have
$$
V_0\simeq \g 
$$ 
as Lie algebras,
$$
V_{-1}\simeq \V,\qquad V_1\simeq V
$$
as $V_0 \simeq \g$-modules,
and that the restricted bracket product 
$$
[\cdot,\cdot ]\mid _{V_{1}\times V_{-1}}\colon V\times \V \simeq V_1\times V_{-1}\rightarrow V_0\simeq \g
$$
is identified with the $\Phi $-map of $(\g,\rho,V,\V,B)$.
}
\item {The graded Lie algebra $L(\g,\rho,V,\V,B)$ is minimal in the sense of \cite [p.1276, Definition 6]{Kac2}.}
\end{itemize}
We call this graded Lie algebra $L(\g,\rho,V,\V,B)$ {\rm the Lie algebra associated with the standard pentad $(\g,\rho,V,\V,B)$}.
The graded Lie algebra $L(\g,\rho,V,\V,B)$ is determined from $(\g,\rho,V,\V,B)$ uniquely\footnote {In \cite {Sa3}, the author showed the existence of $L(\g,\rho,V,\V,B)$ without using the V.Kac's results. This is the theory of standard pentads. After this, H.Rubenthaler showed similar results using V.Kac's results and H.Rubenthaler's theory of fundamental triplets in \cite {Ru1} (refer \S \ref {aim1} below). The theories of standard pentads and fundamental triplets have the same goal in the sense of \cite [p.125]{Sa4}. Thus, we have the claim of Theorem \ref {theo;liepen}.}.
%(Now, any standard quadruplet $(\g,\rho,V,B)$ is canonically regarded as a standard pentad $(\g,\rho,V,\Hom (V,\C),B)$.)
\end{theo}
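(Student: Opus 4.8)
The plan is to split the statement into two essentially independent pieces: a purely algebraic construction that turns the pentad into a \emph{local} Lie algebra in the sense of \cite{Kac2}, and then a black-box application of V.~Kac's embedding theorem together with his notion of minimality. The theorem then follows by matching the two.

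First I would build the local Lie algebra on the graded space $\g_{-1}\oplus\g_0\oplus\g_1$ with $\g_0=\g$, $\g_1=V$, $\g_{-1}=\V$. The bracket on $\g_0$ is the given Lie bracket; for $a\in\g_0$ I set $[a,v]=\rho(a\otimes v)$ and $[a,\phi]=\varrho(a\otimes\phi)$, extended antisymmetrically; and for $v\in\g_1$, $\phi\in\g_{-1}$ I set $[v,\phi]=\Phi_\rho(v\otimes\phi)\in\g_0$. Products that would leave $\g_{-1}\oplus\g_0\oplus\g_1$ (namely $[\g_1,\g_1]$ and $[\g_{-1},\g_{-1}]$) are left undefined, as the notion of a local Lie algebra permits. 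The content to check is that this partial bracket is degree-additive, antisymmetric, and satisfies the Jacobi identity for every triple all of whose brackets are defined.

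The only nonroutine Jacobi identity is the mixed one on $\g_0\times\g_1\times\g_{-1}$, and this is where conditions (1) and (2) of Definition \ref{defn;stap} enter; I expect it to be the main (indeed, the only) real obstacle, everything else being bookkeeping. Expanding $[[a,v],\phi]+[[v,\phi],a]+[[\phi,a],v]$ lands in $\g_0$, so by non-degeneracy of $B$ it suffices to pair it against an arbitrary $b\in\g$. Using the defining equation of the $\Phi$-map and the invariance of $B$ (for the middle term, $B(b,[a,\Phi_\rho(v\otimes\phi)])=B([b,a],\Phi_\rho(v\otimes\phi))$), each term becomes a value of the canonical pairing; after rewriting the $\varrho$-term through the relation $\langle\rho(a\otimes w),\phi\rangle=-\langle w,\varrho(a\otimes\phi)\rangle$, the sum collapses to $\langle\rho(b\otimes\rho(a\otimes v))-\rho(a\otimes\rho(b\otimes v))-\rho([b,a]\otimes v),\phi\rangle$, which vanishes precisely because $\rho$ is a representation. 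Non-degeneracy of $B$ then forces the $\g_0$-valued sum itself to be zero, establishing the local Lie algebra.

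With the local Lie algebra in hand, I would invoke \cite[p.1276, Proposition 4]{Kac2} to obtain a $\Z$-graded Lie algebra whose local part is $\g_{-1}\oplus\g_0\oplus\g_1$, and pass to the minimal one in the sense of \cite[p.1276, Definition 6]{Kac2} by quotienting the maximal graded realization by its largest graded ideal meeting the local part trivially; call the result $L(\g,\rho,V,\V,B)=\bigoplus_{n\in\Z}V_n$. The asserted isomorphisms $V_0\simeq\g$, $V_1\simeq V$, $V_{-1}\simeq\V$, and the identification of $[\cdot,\cdot]\mid_{V_1\times V_{-1}}$ with $\Phi_\rho$, are then immediate, since by construction the degree $-1,0,1$ pieces are exactly the local part we started from, with $V_0$ acting on $V_{\pm1}$ through $\rho$ and $\varrho$. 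Finally, uniqueness follows from the universal property of the minimal graded Lie algebra attached to a fixed local part: any two such are canonically isomorphic as graded Lie algebras, which is exactly the content obtained by combining \cite[Theorem 2.15]{Sa3} with \cite[p.125]{Sa4}.
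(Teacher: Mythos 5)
Your argument is correct, but it is not the paper's route. The paper offers no proof of Theorem \ref{theo;liepen} at all: the theorem is stated as a summary of \cite[Theorem 2.15]{Sa3} and \cite[p.125]{Sa4}, and its footnote emphasizes that the construction in \cite{Sa3} establishes the existence of $L(\g,\rho,V,\V,B)$ \emph{without} invoking V.~Kac's results --- the graded components $V_n$ are built directly, via positively and negatively graded modules (the ``positive/negative extensions'' that this paper uses again in Section 3). What you give is instead the Kac--Rubenthaler route, which the paper only sketches in the paragraph preceding the theorem and attributes to \cite{Ru1}: verify that the pentad data define a local Lie algebra on $\V\oplus\g\oplus V$, then apply \cite[p.1276, Proposition 4]{Kac2} and pass to the minimal graded Lie algebra. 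Your verification of the crucial mixed Jacobi identity is sound: pairing the $\g$-valued sum against an arbitrary $b\in\g$, using invariance of $B$ and both equalities defining $\Phi_\rho$, reduces everything to the representation property of $\rho$, and non-degeneracy of $B$ finishes; existence, minimality and uniqueness then come from Kac's universal constructions. One small inaccuracy: condition (1) of Definition \ref{defn;stap} plays no role in this Jacobi check --- only condition (2), invariance, and non-degeneracy of $B$ enter; condition (1) matters elsewhere in the theory (for instance for transitivity). As for what each approach buys: yours is shorter and conceptually clean, delegating the hard work to \cite{Kac2}; the self-contained construction of \cite{Sa3} produces explicit models of the components $V_n$ and structural properties (such as transitivity in degrees $|n|\geq 2$) that this paper relies on later, e.g.\ in the embedding arguments of Section 3.
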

Using Theorem \ref{theo;liepen}, we can ``embed'' any finite-dimensional Lie algebra and its representation (independent to its prehomogeneity) into some graded Lie algebra in the same sense as prehomogeneous vector spaces of parabolic type.
In 2015, it is proved by the author that the prehomogeneity of a representation $(\rho, V)$ of the reductive group $G$ can be described by the algebraic structure of Lie algebras of the form $L(\Lie (G),d\rho,V,\Hom (V,\C), B)$ (for detail, see \cite {Sa2}).
Thus, using the notion of standard pentads, we can extend the theory of prehomogeneous vector spaces of parabolic type.
For example, in the theory of prehomogeneous vector spaces of parabolic type, the regularity of prehomogeneous vector spaces is expressed by $\sl _2$-subalgebras of graded Lie algebras.
Using the notion of standard pentads, the regularity of prehomogeneous vector spaces (not necessary of parabolic type) with $1$-dimensional scalar multiplication can be expressed by $\sl _2$-subalgebras of graded Lie algebras (see \cite [Theorem 2.6]{Sa4}).
%Thus, any prehomogeneous vector space with a reductive group can be obtain from the local part of some graded Lie algebra $L(\g,\rho,V,\V,B)$.

\subsection {A brief explanation on fundamental results (PC Lie algebras)}
Here, we have a problem that ``how can we write such graded Lie algebras $L(\g,\rho,V,\V,B)$ using well-known Lie algebras?''.
To answer this problem, we have the notion of PC Lie algebras, which will be mainly discussed in this paper.
The definition of PC Lie algebras is as the following.
\begin{defn}[\text {PC Lie algebras, \cite [Definitions 2.1--2.4]{Sa5}}]
\label {defn;1}
Let $r,n$ be positive integers, $A\in M(r,r)$ an invertible matrix, $D=(d_{ij})\in M(r,n)$ a matrix, $\Gamma =\diag (\gamma _1,\ldots ,\gamma _n)\in M(n,n)$ an invertible diagonal matrix.
We define some objects from the data $(r,n;A,D,\Gamma )$ as follows:
\begin{itemize}
\item {
we let $\h ^r=\{(a_1,\ldots ,a_r)\mid a_1,\ldots ,a_r\in \C \}$ be an $r$-dimensional commutative Lie algebra spanned by $\epsilon _i=(\delta _{i1},\ldots ,\delta _{in})$ $(i=1,\ldots ,n)$ where $\delta _{ik}$ stands for the Kronecker delta,
}
\item {
we let $\C _D=\C e_1\oplus \cdots \oplus \C e_n$, $\C_{-D}=\C f_1\oplus \cdots \oplus \C f_n$ be $n$-dimensional vector spaces spanned by symbols $e_i$\rq{s} and $f_i$\rq{s} respectively,}
\item {
we define $\h $-module structures $\Box _D$, $\Box _{-D}$ on $\C _D$, $\C_{-D}$ by $\Box _D(\epsilon  _i\otimes e_j)=d_{ij}e_j$, $\Box _{-D}(\epsilon _i\otimes f_j)=-d_{ij}f_j$ for $1\leq i\leq r$, $1\leq j\leq n$ respectively,
}
\item {
we define a bilinear form $B_A$ on $\h ^r$ by
\begin{align*}
B_A
\left (
(a_1,\ldots ,a_n), (a_1^{\prime },\ldots, a_n^{\prime })
\right )
=\begin{pmatrix}a_1&\cdots &a_n\end{pmatrix}{}^t A^{-1}\begin{pmatrix}a_1^{\prime }\\ \vdots \\ a_n^{\prime }\end{pmatrix},
\end{align*}}
\item {
we define a pairing $\langle \cdot ,\cdot \rangle $ between $\C _D$ and $\C_{-D}$ by $\langle e_i,f_j\rangle =\delta _{ij}\gamma _i$.
}
\end{itemize}
Under these, we have a standard pentad $(\h ^r, \Box _D, \C _D, \C _{-D}, B_A)$ and denote it by $P(r,n,A,D,\Gamma )$.
We denote the corresponding Lie algebra to $P(r,n;A,D,\Gamma )$ by $L(r,n;A,D,\Gamma )$.
We call a Lie algebra of the form $L(r,n;A,D,\Gamma )$ {\it a PC Lie algebra}\footnote {the term {\it PC} comes from {\it P}entad of {\it C}artan type.}.
\end{defn}
In the theory of standard pentads, we have an isomorphism among a repetition of construction of graded Lie algebras, ``chain rule'' (see \cite [Theorem 3.26]{Sa3}).
This theorem gives a proof of the following theorems.
\begin{theo}[\text {\cite[Theorem 2.1]{Sa6}}]\label{theo;pcchain}
Let $\g$ be a finite-dimensional reductive Lie algebra, $(\rho ,V)$ a finite-dimensional completely reducible representation, $B$ a non-degenerate symmetric invariant bilinear form on $\g$ all defined over $\C$.
Then, a pentad $(\g,\rho,V,\Hom (V,\C),B)$ is standard and the corresponding Lie algebra $L(\g,\rho,V,\Hom (V,\C),B)$ is isomorphic to some PC Lie algebra.
\end{theo}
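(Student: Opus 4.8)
I would prove the two assertions in turn: standardness of the pentad, which is routine, and the isomorphism with a PC Lie algebra, which I would obtain by first realizing $\g$ itself as a PC Lie algebra and then absorbing the representation via the chain rule. For standardness, take $\V=\Hom(V,\C)$ to be the full contragredient module. Condition (1) of Definition \ref{defn;stap} holds because the canonical pairing between the finite-dimensional space $V$ and its dual is non-degenerate. For condition (2), fix $v\in V$, $\phi\in\V$ and consider the linear functional $a\mapsto\langle\rho(a\otimes v),\phi\rangle$ on $\g$; since $B$ is non-degenerate and $\g$ is finite-dimensional, a unique $\Phi_\rho(v\otimes\phi)\in\g$ represents it. Bilinearity in $(v,\phi)$ yields a linear map $\Phi_\rho\colon V\otimes\V\to\g$, and the remaining identity $\langle\rho(a\otimes v),\phi\rangle=-\langle v,\varrho(a\otimes\phi)\rangle$ is exactly the defining relation of the contragredient action $\varrho$. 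Hence the pentad is standard and $L(\g,\rho,V,\Hom(V,\C),B)$ exists by Theorem \ref{theo;liepen}.

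Next I would present $\g$ alone as a PC Lie algebra. Decompose $\g=\z\oplus[\g,\g]$ with $\z$ central and $[\g,\g]$ semisimple, fix a Cartan subalgebra of $[\g,\g]$, and adjoin $\z$ to obtain a maximal toral subalgebra $\h$ with $\dim\h=r$; let $\alpha_1,\ldots,\alpha_{n_0}$ be the simple roots. Since $\g$ is generated over $\h$ by the simple root vectors, it is recovered as the (finite-dimensional) minimal graded Lie algebra built from this local part. Taking $A_0$ to be the Gram matrix of the non-degenerate restriction $B\mid_\h$, letting $D_0$ be the matrix whose columns are $\alpha_1,\ldots,\alpha_{n_0}$, and choosing a suitable normalization $\Gamma_0$, this exhibits $\g$ as a PC Lie algebra $L(r,n_0;A_0,D_0,\Gamma_0)$ whose generalized Cartan matrix is that of $\g$.

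Finally I would build in the representation. As $(\rho,V)$ is completely reducible, $V=\bigoplus_{k=1}^{n_1}V^{(k)}$ with each $V^{(k)}$ irreducible of highest weight $\lambda_k$, and each $V^{(k)}$ is generated from its highest weight vector under the action of $\g$, which already lies in the algebra above. Applying the chain rule \cite[Theorem 3.26]{Sa3} to $(\g,\rho,V,\Hom(V,\C),B)$ with $\g$ written as $L(r,n_0;A_0,D_0,\Gamma_0)$, the two-step construction collapses to a single PC datum: augment $D_0$ by the columns $\lambda_1,\ldots,\lambda_{n_1}$ and extend $\Gamma_0$ by the pairing normalizations of the highest weight vectors, keeping $r$ and $A=A_0$ fixed. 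This produces PC data $(r,n;A,D,\Gamma)$ with $n=n_0+n_1$ and an isomorphism $L(\g,\rho,V,\Hom(V,\C),B)\cong L(r,n;A,D,\Gamma)$ of Lie algebras, the two $\Z$-gradings being unrelated: the Cartan $\h$ lies in degree $0$ on the PC side, whereas the root vectors of $\g$ occupy nonzero degrees.

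The step I expect to be the main obstacle is the reconstruction of the non-abelian $\g$ from the abelian $\h^r$. One must check that $B\mid_\h$ together with the normalization $\Gamma_0$ can be arranged so that the bracket $[e_i,f_j]$ prescribed by the $\Phi$-map of the PC construction reproduces the Chevalley relations of $\g$, i.e. that the generalized Cartan matrix coming from $(A_0,D_0,\Gamma_0)$ genuinely is that of $\g$, so that the $L$-construction terminates and returns the finite-dimensional $\g$. Granting this, the passage from $\g$ to $\g$-with-$V$ via the chain rule is formal; one then verifies that complete reducibility guarantees the highest weight vectors suffice as new generators and that invertibility of $A$ and $\Gamma$ is preserved, so that the combined datum is legitimate PC data.
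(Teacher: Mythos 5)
Your overall strategy is the same as the paper's: the paper does not prove Theorem \ref{theo;pcchain} from scratch but attributes it to the chain rule \cite[Theorem 3.26]{Sa3} together with the explicit computation of the PC datum in \cite[Theorem 3.28]{Sa5}, which is exactly your plan (standardness, realization of the reductive $\g $ itself as a Cartan-type pentad on $\h $, then absorption of $V$ via the chain rule). Your standardness argument is correct, and so is the realization of $\g $ as a PC Lie algebra, up to one normalization: by Definition \ref{defn;1} the form $B_A$ involves ${}^t A^{-1}$, so $A_0$ must be the \emph{inverse} of the Gram matrix of $B\mid _{\h }$, not the Gram matrix itself (compare $\sl _2\simeq L(1,1;(1/8),(2),(4))$ with $K_{\sl _2}(h,h)=8$).

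The genuine error is in your third step. Having chosen the columns of $D_0$ to be the simple roots $\alpha _1,\ldots ,\alpha _{n_0}$, so that the degree-one space $\C _D$ of the Cartan-type pentad consists of the \emph{positive} simple root vectors, you must augment $D_0$ by the \emph{lowest} weights of the irreducible summands $V^{(k)}$ and take lowest weight vectors as the new degree-one generators, not the highest weights $\lambda _k$ and highest weight vectors. The reason is the grading: in the re-graded algebra the rest of $V^{(k)}$ must sit in degrees $\geq 2$, hence must be reached from the degree-one generator by raising operators; a highest weight vector is annihilated by raising operators, while the rest of $V^{(k)}$ is obtained from it by lowering operators, which have degree $\leq -1$. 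What the positive extension of \cite[Theorems 3.12, 3.14, 3.17]{Sa3} produces from the line $\C v^{\lambda _k}$ is therefore an infinite-dimensional module, not $V^{(k)}$: already for $\g =\sl _2$ and $V=V(2)$ your datum would be $D=\begin{pmatrix}2&1\end{pmatrix}$, and the transitive module generated in degree one from the weight-$(+1)$ vector is spanned by $v,xv,x^2v,\ldots $ with no vanishing, so the resulting PC Lie algebra is not isomorphic to $L\left (\sl _2,1\Lambda _1,V(2),\Hom (V(2),\C ),K_{\sl _2}\right )$; the correct column entry is the lowest weight $-1$. This is also what the paper's matrix $\widetilde {D}$ in Section 3 records: its representation columns carry the lowest weights $-i$ of the $(i+1)$-dimensional modules, and Proposition \ref{pr;sl_2_univ_0} identifies $V(n+1)$ with the positive extension of the lowest weight line $\C v_{n\Lambda _1}$, not the highest weight line. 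The repair is immediate: use the lowest weights with your $D_0$ (equivalently, flip all signs and use negative simple roots together with highest weight vectors); with that correction, and the check that $A_0$ stays invertible and $\Gamma $ stays diagonal invertible (which is automatic), your argument goes through and agrees with the cited proof.
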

Using \cite [Theorem 3.28]{Sa5}, we can compute the datum $(r,n;A,D,\Gamma)$ of PC Lie algebra corresponding to $(\g,\rho,V,\Hom (V,\C),B)$ in Theorem \ref{theo;pcchain} explicitly.
Thus, to compute the structure of $L(\g,\rho,V,\V,B)$ under the conditions of Theorem \ref {theo;pcchain}, it is enough to compute the structure of PC Lie algebras.
The structure of each PC Lie algebra can be expressed with (reduced) contragredient Lie algebras, defined by V.Kac (see \cite {Kac2}).
\begin{theo}[\text {\cite [Theorem 3.2]{Sa6}}]\label{theo;pcstructure}
Let $(r,n;A,D,\Gamma )$ be an arbitrary pentad of Cartan type.
Then, we have an isomorphism
\begin{align*}
L(r,n;A,D,\Gamma )\simeq (Z\oplus G^{\prime }(C(A,D,\Gamma )))\oplus \Delta 
\end{align*}
where 
\begin{itemize}
\item {the matrix $C(A,D,\Gamma )$ is $n\times n$ square matrix defined by $C(A,D,\Gamma )=(C_{ij})=\Gamma \cdot {}^t D\cdot A\cdot D$, which is called ``the Cartan matrix'' of the pentad $(r,n;A,D,\Gamma )$},
\item {$G^{\prime }(C(A,D,\Gamma ))$ is the reduced contragredient Lie algebra whose Cartan matrix $C(A,D,\Gamma )$ (the term ``Cartan matrix'' of this item is in the sense of contragredient Lie algebras (see \cite [p.1279]{Kac2}))},
\item {$[L(r,n;A,D,\Gamma ), L(r,n;A,D,\Gamma )]=Z\oplus G^{\prime }(C(A,D,\Gamma ))$},
\item {$Z \subset (\text {the center of }L(r,n;A,D,\Gamma ))$},
\item {$\Delta $ acts on each grading component diagonally},
\item {$\dim Z=\rank D-\rank C(A,D,\Gamma )$, $\dim \Delta =r-\rank D$}.
\end{itemize}
\end{theo}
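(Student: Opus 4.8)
The plan is to read off the local structure constants of the pentad $P(r,n;A,D,\Gamma )$, to recognise them as those of a contragredient Lie algebra attached to the matrix $C(A,D,\Gamma )$, and then to split the degree-zero space $\h ^r$ into the three advertised pieces, invoking the uniqueness of the minimal graded Lie algebra (Theorem \ref{theo;liepen}) at each comparison. Concretely, I would first compute the $\Phi $-map. Writing $d_i=(d_{1i},\ldots ,d_{ri})$ for the $i$-th column of $D$, the defining equation $B_A(\epsilon _k,\Phi (e_i\otimes f_j))=\langle \Box _D(\epsilon _k\otimes e_i),f_j\rangle =d_{ki}\delta _{ij}\gamma _i$ forces $\Phi (e_i\otimes f_j)=0$ for $i\neq j$ and $h_i:=\Phi (e_i\otimes f_i)=\gamma _i\,{}^t\!A\,d_i\in \h ^r$ (using $B_A(\epsilon _k,h)=({}^t\!A^{-1}h)_k$). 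A direct computation then gives $[\epsilon _k,e_j]=d_{kj}e_j$, $[\epsilon _k,f_j]=-d_{kj}f_j$ together with $[h_i,e_j]=C_{ij}e_j$ and $[h_i,f_j]=-C_{ij}f_j$, where $C_{ij}=(C(A,D,\Gamma ))_{ij}$; that is, the simple root $\lambda _j\in (\h ^r)^{*}$ of $e_j$ satisfies $\langle \lambda _j,h_i\rangle =C_{ij}$. Thus the local Lie algebra $\C _{-D}\oplus \h ^r\oplus \C _D$ is precisely the local part of the contragredient construction for the realization $(\h ^r,\{\lambda _j\},\{h_i\})$ of the matrix $C(A,D,\Gamma )$.

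Second, I would separate the ``outer'' directions $\Delta $. Put $H_1=\Span \{h_1,\ldots ,h_n\}\subseteq \h ^r$; since $A$ and $\Gamma $ are invertible, $\dim H_1=\rank D$, and the degree-zero part of the derived algebra is $[L,L]_0=[\C _D,\C _{-D}]=H_1$. Choosing any complement $\Delta $ with $\h ^r=H_1\oplus \Delta $ gives $\dim \Delta =r-\rank D$. Each $h\in \Delta $ acts on every weight space by a scalar and never occurs in a bracket lying in $[L,L]$, hence extends to a diagonalisable outer derivation of the whole graded algebra; this yields $L=[L,L]\oplus \Delta $ with $\Delta $ acting diagonally on each $V_n$, which accounts for the stated $\dim \Delta $.

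Third, inside $H_1$ I would isolate the center. Let $Z=\{z\in H_1:\langle \lambda _j,z\rangle =0\ (1\le j\le n)\}$. Any $z\in Z$ commutes with all $e_j,f_j$ and with $\h ^r$, so $Z$ is central in $L$; writing $z=\sum _ic_ih_i$ the condition becomes ${}^tC\,c=0$, while the relations among the $h_i$ are $D\Gamma c=0$, so from $C=\Gamma \,{}^t\!D\,A\,D$ one obtains $\dim Z=(n-\rank C)-(n-\rank D)=\rank D-\rank C$. A complement $\h '$ of $Z$ in $H_1$ then has dimension $\rank C$, the images $\bar h_i$ of the coroots span it, and the induced pairing $\langle \lambda _j,\bar h_i\rangle =C_{ij}$ has rank $\rank C=\dim \h '$, hence is non-degenerate; thus $(\h ',\{\lambda _j\},\{\bar h_i\})$ is a reduced (centerless) realization of $C(A,D,\Gamma )$. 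By the uniqueness clause of Theorem \ref{theo;liepen}, the graded algebra built from this local part with Cartan $\h '$ is exactly the reduced contragredient Lie algebra $G'(C(A,D,\Gamma ))$, and $[L,L]$ is its central extension by $Z$, which is the decomposition $[L,L]=Z\oplus G'(C(A,D,\Gamma ))$, with the restricted bracket $[\cdot ,\cdot ]\mid _{V_1\times V_{-1}}$ recovering the $h_i$ as in Theorem \ref{theo;liepen}.

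The hard part will be the third step: passing between the three realizations (Cartan $\h ^r$, then $H_1$, then $\h '$) while keeping the positive and negative parts under control. The point is that each change of Cartan alters only the degree-zero space and the scalars by which it acts, not the defining weights $\lambda _j$ nor the coroots modulo $Z$; one must verify that the maximal graded ideal meeting the local part trivially is the same for all three realizations, so that the minimal graded Lie algebras coincide in every degree. This is exactly what the minimality of $L(\g ,\rho ,V,\V ,B)$ in Theorem \ref{theo;liepen} provides once the local parts are matched, and it is what makes the vector-space decomposition $L\simeq (Z\oplus G'(C(A,D,\Gamma )))\oplus \Delta $—with $Z$ central and $\Delta $ diagonal—valid degree by degree. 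I expect no further obstacle in the dimension counts, since both $\dim Z=\rank D-\rank C$ and $\dim \Delta =r-\rank D$ fall out of the linear algebra above.
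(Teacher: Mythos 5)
A preliminary remark: this paper never proves the statement at all --- it is quoted from \cite[Theorem 3.2]{Sa6} --- so your argument can only be judged on its own merits, not compared with an in-paper proof. Your first two steps are correct and are certainly the right ingredients: $\Phi (e_i\otimes f_j)=\delta _{ij}h_i$ with $h_i=\gamma _i\,{}^tA\,d_i$, hence $[h_i,e_j]=C_{ij}e_j$, $[h_i,f_j]=-C_{ij}f_j$, $\dim \Span \{h_1,\ldots ,h_n\}=\rank D$, and the counts $\dim Z=\rank D-\rank C(A,D,\Gamma )$, $\dim \Delta =r-\rank D$ all check out (modulo two points you elide: $[L,L]\cap V_{\pm 1}=V_{\pm 1}$ requires every column of $D$ to be nonzero, and $[L,L]\cap V_0=[V_1,V_{-1}]$ needs the easy induction $[V_k,V_{-k}]\subseteq [V_1,V_{-1}]$).

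The genuine gap is the closing move of your third step, where ``$[L,L]$ is a central extension of $G^{\prime }(C)$ by $Z$'' silently becomes ``$[L,L]=Z\oplus G^{\prime }(C)$''. Central extensions do not split in general, and here the extension is genuinely non-split: in the paper's own affine example (\S 1.2 together with Theorem \ref{maintheo1}, item 3), $L\simeq \g (C)$ for the affine matrix, so $[L,L]=[\g (C),\g (C)]=\left (\C [t,t^{-1}]\otimes \sl _2\right )\oplus \C K$ carries Kac's cocycle bracket and is perfect, whereas the trivial extension $\C K\oplus \left (\C [t,t^{-1}]\otimes \sl _2\right )$ has derived algebra equal to the loop algebra alone; these are not isomorphic Lie algebras. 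So under the Lie-algebra-direct-sum reading your conclusion is false, and the ``$\oplus $'' in the statement can only be a vector-space decomposition; the real content is then $[L,L]/Z\simeq G^{\prime }(C)$. Your argument for that identification is also incomplete: the ``graded algebra built from the local part with Cartan $\h ^{\prime }$'' is not a subalgebra of $[L,L]$ (any subalgebra containing all $e_j,f_j$ contains every $h_i=[e_i,f_i]$, hence all of $H_1\supseteq Z$), it is a quotient; and identifying two graded Lie algebras by ``uniqueness once the local parts are matched'' requires both of them to be \emph{minimal}, which the minimality of $L$ in Theorem \ref{theo;liepen} does not automatically confer on $[L,L]/Z$, nor on $G(C)/Z(C)$. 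The missing lemma is a transitivity argument --- transitivity of $L$ in nonzero degrees passes to $[L,L]$ and to its quotient by the central subspace $Z$, and transitivity implies minimality by \cite[p.1278, Proposition 5]{Kac2} --- which is exactly the device this paper itself uses at the corresponding point in the proof of item 4 of Theorem \ref{maintheo1}. Without settling the meaning of ``$\oplus $'' and without this minimality transfer, your third step does not go through.
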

That is, via the theory of standard pentads, any finite-dimensional reductive Lie algebra and its finite-dimensional completely reducible representation can be reduced to the structure theory of some PC Lie algebras, or the theory of (reduced) contragredient Lie algebras.
In particular, the study of prehomogeneous vector spaces with a reductive group and its completely reducible representation is also reduced to the theory PC Lie algebras.
\par
Using the notion of PC Lie algebras, we can distinguish a (reduced) contragredient Lie algebra with a certain Cartan matrix and a Kac-Moody algebra with the same Cartan matrix.
For example, if we take 
$$
C=\begin{pmatrix}2&-2\\-2&2\end{pmatrix}, 
$$
the corresponding reduced contragredient Lie algebra $G^{\prime }(C)$ and Kac-Moody algebra $\g(C)$ are respectively 
$$
G^{\prime }(C)\simeq (\text {the loop algebra $\C [t,t^{-1}]\otimes \sl _2$})\not \simeq \left (\C [t,t^{-1}]\otimes \sl _2\right )\oplus \C K\oplus \C d\simeq \g (C)
$$
(the notations $\C K$, $\C d$ are as \cite [Chapter 7]{Kac1}).
To obtain these algebras, we need ``the common data $C$'' and ``different ways (the construction of reduced contragredient Lie algebras and the one of Kac-Moody algebras)''.
On the other hand, using the notion of PC Lie algebras, we can obtain these algebras with ``different data'' and ``the common way'':
\begin{align*}
&
\C [t,t^{-1}]\otimes \sl _2\simeq G^{\prime }(C)\simeq L\left (1,2;\begin{pmatrix}1/8\end{pmatrix},\begin{pmatrix}2&-2\end{pmatrix},\begin{pmatrix}4&0\\0&4\end{pmatrix}\right )
&
\\
&
\left (\C [t,t^{-1}]\otimes \sl _2\right )\oplus \C K\oplus \C d\simeq \g (C)\simeq L\left (3,2;\begin{pmatrix}1/8&0&0\\0&0&1\\0&1&0\end{pmatrix},\begin{pmatrix}2&-2\\0&0\\0&1\end{pmatrix},\begin{pmatrix}4&0\\0&4\end{pmatrix}\right )
&
\end{align*}
 (see \cite [Examples 3.14, 3.15]{Sa5}).
%\par 
%For a finite-dimensional semisimple Lie algebra $\g $, we have some PC Lie algebra $L(r,n;A,D,\Gamma )$ such that 
%$$
%\g \simeq L(r,n;A,D,\Gamma )
%$$
%as Lie algebras up to gradings.
%This follows from \cite [Example 3.3.4]{Ru1} or \cite [Lemma 3.25]{Sa5}.
%If we have a finite-dimensional representation of $\g \simeq L(r,n;A,D,\Gamma )$, we can compute the Lie algebra 
%$$
%L(\g ,\rho ,V,\Hom(V,\C),B)
%$$
%for any non-degenerate invariant bilinear form\footnote {From the general theory of Lie algebras, such bilinear forms must be symmetric.} $B$ on $\g $.
%For the detail of the method of computation, see \cite [\S 3.4]{Sa5}, especially, \cite [Theorem 3.28]{Sa5}.

%If we have precious datam of $(\g,\rho,V,\Hom(V,\C),B)$, where $B$ is symmetric, then we can compute the Lie algebra $L(\g,\rho,V,\Hom(V,\C ),B)$ as the following.
%\begin{ex}
%We put $(\g, \rho,V,\Hom(V,\C),B)$ as
%\begin{itemize}
%\item {$\g =\gl _1+\sl _m\ (\simeq \gl _m)$ }
%\item {$\rho ((a,A))v=av+Av$}
%\item {$B((a_1,A_1),(a_2,A_2))=sa_1a_2+\Tr (A_1A_2)$}
%\end{itemize}
%Then, the Lie algebra $L(\g,\rho,V,\V,B)$ is isomorphic to the PC Lie algebra 
%\begin{align*}
%&L(\g,\rho,V,\V,B)\simeq 
%L\left (
%2,
%\right )
%\end{align*}
%\end{ex}

\subsection {The first aim of this paper, comparing with Kac-Moody algebras}\label{aim1}
The structure of each PC Lie algebra is determined in Theorem \ref {theo;pcstructure}.
So, our next interest is the whole class of PC Lie algebras, ``how large is the class of PC Lie algebras?''. 
In this paper, we shall consider this problem\footnote {
As above, the class of prehomogeneous vector spaces with a reductive group and its completely reducible representation is reduced to the class of PC Lie algebras with a certain algebraic property.
Moreover, regularity of prehomogeneous vector spaces with 1-dimensional scalar multiplication is also expressed by a class of PC Lie algebras with a certain algebraic property.
So, to understand the class of PC Lie algebras helps us to understand prehomogeneous vectos spaces.
}.
For this, we shall compare the class of PC Lie algebras and the one of Kac-Moody algebras.
\par
By the way, in 2017, H.Rubenthaler announced a similar theory to the theory of standard pentads, called the theory of fundamental triplets (see \cite [Definition 3.1.1]{Ru1}).
And moreover, he proved that the class of his new Lie algebras contains the the class of symmetrizable Lie algebras, which is the subclass of Kac-Moody algebras consists of ones whose Cartan matrix is symmetric (see \cite [Example 3.3.4]{Ru1}).
\par
H.Rubenthaler's result can be applied to the author's theory of standard pentads (see \cite [Theorem 1.3]{Sa4}).
So, in other words, we can say that our aim is to extend his result to the whole class of Kac-Moody algebras.
In in Theorem \ref{maintheo1}, we shall show that the class of PC Lie algebras contains the one of Kac-Moody algebras.

\subsection {The second aim of this paper, a ``HUGE'' Lie algebra which contains $\sl _2$ and its all finite-dimensional representations}
The second aim of this paper, we give a ``HUGE'' Lie algebra $L\left (\sl _2, \mathrm {f.d.}\right )$ which is beyond the notion of PC Lie algebras.
Using the theory of standard pentads, we can embed a given finite-dimensional semisimple Lie algebra and a given representation into some (PC) Lie algebra.
So, we have a natural question that ``For a given finite-dimensional semisimple Lie algebra, can we obtain and compute a Lie algebra such that the given semieimple Lie algebra and its ALL finite-dimensional representation can be embedded into it?''.
In this paper, we consider this problem in the case where the (semi)simple Lie algebra is $\sl _2$.
The Lie algebra $L\left (\sl _2, \mathrm {f.d.}\right )$ is a Lie algebra thus constructed. 
%This Lie algebra contains $\sl _2$ and its any finite-dimensional representation in the sense of Theorem \ref{theo;liepen}.
Although this Lie algebra is NOT a PC Lie algebra, we can construct this by extending the idea of PC Lie algebras.

\section {PC Lie algebras and Kac-Moody algebras}
As we have seen in the previous section, PC Lie algebras are expressed by (reduced) contragredient Lie algebras.
In this section, we shall compare these classes.
On the notion and notations of Kac-Moody algebras, refer \cite [Chapters 1,2]{Kac1}.
\subsection {Main theorem}
In \cite [Example 3.3.4]{Ru1}, H.Rubenthaler showed that a symmetrizable Lie algebra $\g (C)$, where $C$ is a symmetrizable matrix but is not necessary a generalized Cartan matrix, is obtained as $\g_{min}(\Gamma (\g _0,B_0,(\rho ,V)))$ from some fundamental triplet
\footnote {A fundamental triplet is a triplet $(\g _0,B_0,(\rho,V))$ which consists of a quadratic Lie algebra $(\g _0,B_0)$ and finite-dimensional $\g _0$-module $(\rho ,V)$ (\cite [Definition 3.1.1]{Ru1}). 
From a fundamental triplet, we can construct graded Lie algebras $\g _{max}(\Gamma (\g _0,B_0,\rho))$, $\g _{min}(\Gamma (\g _0,B_0,\rho))$ (\cite [\S 3.3]{Ru1}).
The goals of his construction theory of minimal Lie algebra $\g _{min}(\Gamma (\g _0,B_0,\rho))$ and of our theory of standard pentads are coincides.
That is, if we consider a standard pentad $(\g _0,\rho ,V,\Hom (V,\C ), B_0)$, the corresponding graded Lie algebra $L(\g _0,\rho ,V,\Hom (V,\C ), B_0)$ is isomorphic to $\g _{min}(\Gamma (\g _0,B_0,\rho))$ (\cite [Theorem 1.3]{Sa4}).
}.
Let us extend his results in the following theorem.
\begin{theo}\label {maintheo1}
The followings hold:
\begin{enumerate}
\item {for any invertible matrix $C$, the contragredient Lie algebra $G(C)\simeq G^{\prime }(C)$ belongs to the class of PC Lie algebras},
\item {for any symmetrizable matrix $C$ which has a symmetrization with real number entries (see \cite [\S 2.1]{Kac1}), the reduced contragredient Lie algebra $G^{\prime }(C)$ belongs to the class of symmetric PC Lie algebras. A symmetric PC Lie algebra is a PC Lie algebra assaciated with a standard pentad having a symmetric bilinearr form (see \cite [Definition 1.14]{Sa5})},
\item {for any (not necessarily a generalized Cartan matrix) square matrix $C$, the Lie algebra $\g (C)$ (defined in \cite [\S 1.3]{Kac1}) belongs to the class of PC Lie algebras, in particular, any Kac-Moody algebra belongs to the class of PC Lie algebras},
\item {for any (not necessarily a generalized Cartan matrix) square matrix $C$, there exists a PC Lie algebra such that its derived algebra is isomorphic to the contragredient Lie algebra with Cartan matrix $C$}.
\end{enumerate}
\end{theo}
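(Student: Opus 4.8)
The plan is to run all four parts through the structure theorem (Theorem~\ref{theo;pcstructure}). To exhibit a prescribed contragredient-type algebra as a PC Lie algebra it is enough to produce a datum $(r,n,A,D,\Gamma)$ whose Cartan matrix is the target, $C(A,D,\Gamma)=\Gamma\,{}^tD\,A\,D=C$, and then read off the two remaining pieces from $\dim Z=\rank D-\rank C$ and $\dim\Delta=r-\rank D$. The three target algebras differ only in these pieces: $G'(C)$ is the centreless core, $G(C)$ adjoins the $(n-\rank C)$-dimensional centre, and $\g(C)$ adjoins in addition $(n-\rank C)$ diagonal derivations. So the entire theorem reduces to dialing in $\rank D$ and $r$ while keeping $C(A,D,\Gamma)=C$, and then identifying the recombination via the uniqueness of the minimal graded Lie algebra (Theorem~\ref{theo;liepen}).

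For (1) and (2) I want $Z=\Delta=0$, i.e.\ $r=\rank D=\rank C$. For (1), with $C$ invertible I would take $r=n$, $D=I_n$, $A=C$, $\Gamma=I_n$: then $C(A,D,\Gamma)=C$ and $\rank D=\rank C=n$ kills both $Z$ and $\Delta$, so $L(r,n;A,D,\Gamma)\simeq G'(C)$, while invertibility also kills the centre and gives $G(C)\simeq G'(C)$. For (2), I would start from a real symmetrization $C=\Gamma_0 S$ with $S$ real symmetric and $\Gamma_0$ diagonal invertible, and factor $S={}^tD_0\,A_0\,D_0$ by the spectral theorem, with $A_0$ a real invertible diagonal $(\rank C)\times(\rank C)$ matrix (the nonzero eigenvalues) and $D_0\in M(\rank C,n)$ of rank $\rank C$ (the corresponding eigenvectors). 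Setting $r=\rank C$, $A=A_0$, $D=D_0$, $\Gamma=\Gamma_0$ again gives $Z=\Delta=0$ and $L\simeq G'(C)$; since $A$ is symmetric, $B_A$ is symmetric, so this is a symmetric PC Lie algebra. It is essential here that a symmetric matrix has equal row and column spaces, which is exactly what permits the ``thin'' factorization with $\rank D=\rank C$; a general $C$ admits no such factorization, and this is precisely why (3) and (4) are forced to enlarge $r$.

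Part (3) is the heart, and rather than guess matrices I would realize the local part of $\g(C)$ itself as a PC local part. Fix a minimal realization $(\h,\{\alpha_i\},\{\alpha_i^{\vee}\})$ of $C$, so $\dim\h=2n-\rank C$, both families are linearly independent, and $\langle\alpha_j,\alpha_i^{\vee}\rangle=c_{ij}$. Set $r=2n-\rank C$, identify $\h^r$ with $\h$, let the $j$-th column of $D$ be the coordinate vector of $\alpha_j$ (so $\rank D=n$, the $\alpha_j$ being independent), choose nonzero scalars $\gamma_1,\dots,\gamma_n$, and choose a nondegenerate bilinear form $B_A$ on $\h^r$ with $\nu_{B_A}(\alpha_i^{\vee})=\gamma_i\alpha_i$, where $\nu_{B_A}\colon\h\to\h^{*}$, $h\mapsto B_A(h,\cdot)$; such a form exists because $\alpha_i^{\vee}\mapsto\gamma_i\alpha_i$ is an injection of one independent family onto another and so extends to an isomorphism $\h\to\h^{*}$. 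As $\h^r$ is abelian this $B_A$ is automatically invariant, and the defining equation of the $\Phi$-map then forces $\Phi(e_i\otimes f_j)=\delta_{ij}\alpha_i^{\vee}$; together with the $\h^r$-weights $\pm\alpha_j$ coming from $\Box_{\pm D}$, the PC local part $P(r,n;A,D,\Gamma)$ is isomorphic as a local Lie algebra to the principal-grading local part $\g_{-1}\oplus\g_0\oplus\g_1$ of $\g(C)$. Since both $L(r,n;A,D,\Gamma)$ (Theorem~\ref{theo;liepen}) and $\g(C)$ are the minimal graded Lie algebra on this common local part, they are isomorphic, and specializing to a generalized Cartan matrix recovers the Kac-Moody case. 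As a consistency check, $\rank D=n$ and $r=2n-\rank C$ yield $\dim Z=\dim\Delta=n-\rank C$ via the structure theorem, matching the centre and degree-derivations of $\g(C)$.

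Finally, (4) would follow from (3): the datum of (3) (or any datum with $\rank D=n$, which by the structure theorem forces $r\ge 2n-\rank C$) has derived algebra $[L,L]=Z\oplus G'(C(A,D,\Gamma))=Z\oplus G'(C)$ with $\dim Z=n-\rank C$, and this central extension of the core is exactly the contragredient Lie algebra $G(C)=[\g(C),\g(C)]$. The step I expect to fight hardest for is the construction in (3) for a \emph{non-symmetrizable} $C$: Rubenthaler's argument and the symmetrizable case rely on an honest nondegenerate invariant form living on $\g(C)$ as a whole, which need not exist once $C$ is non-symmetrizable. What rescues the general case is that the pentad asks only for a form on the \emph{abelian} Cartan $\h^r$, where nondegeneracy is the sole requirement and invariance is vacuous, and such a form sending each $\alpha_i^{\vee}$ to a multiple of $\alpha_i$ always exists by independence of the two families. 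The one genuine computation is then checking that this Cartan-level form reproduces $[e_i,f_i]=\alpha_i^{\vee}$ and $[e_i,f_j]=0$ for $i\ne j$; once the local parts are matched, minimality finishes every part uniformly.
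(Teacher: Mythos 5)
Your parts (1)--(3) are correct and follow essentially the paper's own route. Part (2) is the paper's proof almost verbatim: orthogonally diagonalize the real symmetrization $S={}^tP_1\cdot Q\cdot P_1$ and take the pentad $P(l,n;Q,P_1,\Gamma)$, so that $\dim Z=\dim\Delta =0$ by the structure theorem. For part (1) the paper merely cites \cite[Theorem 3.11]{Sa5}, and your datum $r=n$, $A=C$, $D=\Gamma =E_n$ is the natural construction consistent with that. In part (3) the two arguments are the same strategy in opposite parametrizations: the paper first writes down matrices (an arbitrary invertible completion $A$ of $C$ of size $2n-l$, $D={}^t\left (\begin{array}{c|c}E_n&O\end{array}\right )$, $\Gamma =E_n$) and then observes that the elements $h_i=[e_i,f_i]$ produce a realization of $C$; you start from a minimal realization and build the pentad, choosing a nondegenerate form on $\h$ with $\alpha _i^{\vee }\mapsto \gamma _i\alpha _i$. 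Both then match the local part with the principal local part of $\g (C)$ and finish by minimality; your ``both are the minimal graded Lie algebra'' is the paper's epimorphism-plus-kernel argument in disguise, since minimality of $\g (C)$ is exactly the defining property that no nonzero ideal of $\g (C)$ meets $\h$ trivially. Your remark that invariance is vacuous on the abelian Cartan, so the form need not (and for non-symmetrizable $C$ cannot) be symmetric, is precisely the point that makes the general case go through.

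Part (4), however, has a genuine gap. You reduce it to the assertion that the central extension $[L,L]=Z\oplus G^{\prime }(C)$ ``is exactly'' $G(C)=[\g (C),\g (C)]$. But the data you extract from the structure theorem --- core $G^{\prime }(C)$ together with an $(n-\rank C)$-dimensional centre sitting inside the derived algebra --- do not determine the isomorphism class of the extension. The trivial extension $G^{\prime }(C)\oplus \C ^{\,n-\rank C}$ has the same core and the same centre dimension, yet for affine $C$ it is not isomorphic to $G(C)$: there $G(C)$ is the nontrivially extended loop algebra $\left (\C [t,t^{-1}]\otimes \sl _2\right )\oplus \C K$, which is perfect, whereas the trivial extension is not. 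So one must actually identify which central extension $[L,L]$ is, and likewise the identity $G(C)=[\g (C),\g (C)]$ --- i.e.\ that the derived algebra of Kac's $\g (C)$ is the \emph{minimal} graded Lie algebra on the local part $\bigoplus _i\left (\C f_i\oplus \C h_i\oplus \C e_i\right )$ --- is itself the nontrivial content of part (4), which you assert rather than prove, and which is not among the results quoted in the paper. The paper supplies exactly this missing step: it verifies that the degree-zero piece of the local part of $[L,L]$ is $\sum _i\C [e_i,f_i]$ (using invertibility of $A$, so that no $e_i$ or $f_i$ is annihilated by all of $\h ^{2n-l}$), shows that $[L,L]$ inherits transitivity from $L$ and hence is minimal by \cite[p.1278, Proposition 5]{Kac2}, and only then concludes $[L,L]\simeq G(C)$ from the definition of contragredient Lie algebras. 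Your proof of (4) is incomplete without this transitivity/minimality argument (or an explicit, justified citation replacing it).
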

\begin{proof}
\begin{enumerate}
\item {See \cite [Theorem 3.11]{Sa5}.}
\item {
Let $n $ be the order of $C$, $l$ the rank of $C$, i.e. $C$ is an $n\times n$ square matrix with rank $l$.
From the assumption that $C$ is symmetrizable, there exists a diagonal matrix $\Gamma $ and a symmetric matrix $S={}^t S$ such that $C=\Gamma \cdot S\in M(n,n)$.
Since we can assume that $S$\rq{s} entries are real numbers, there exists an orthogonal matrix $P$ and a symmetric invertible matrix $Q\in M(l,l)$ such that 
\begin{align*}
S
=
{}^t P\cdot 
\left (
\begin{array}{c|c}
Q&O_{l,n-l}
\\
\hline 
O_{n-l,l}&O_{n-l,n-l}
\end{array}
\right )
\cdot P
=
P^{-1}\cdot 
\left (
\begin{array}{c|c}
Q&O_{l,n-l}
\\
\hline 
O_{n-l,l}&O_{n-l,n-l}
\end{array}
\right )
\cdot P.
\end{align*}
Put 
\begin{align*}
&
P=\left (
\begin{array}{c}
P_1
\\
\hline 
P_2
\end{array}
\right ),
\quad 
(P_1\in M(l,n), P_2\in M(n-l,n))&
\end{align*}
(Here, note that $P_1$ has $\rank P_1=l$ since $P$ is invertible.).
Then we have
\begin{align*}
{}^t P_1\cdot Q\cdot P_1=S.
\end{align*}
Here, we consider a pentad of Cartan type $P(l,n;Q,P_1,\Gamma )$.
Its Cartan matrix is 
\begin{align*}
&
\Gamma \cdot {}^t P_1\cdot Q\cdot P_1=\Gamma \cdot S =C.
&
\end{align*}
Thus, we have the following isomorphism
\begin{align*}
&
L(l,n; Q,P_1,\Gamma )\simeq (Z\oplus G^{\prime }(C))\oplus \Delta \simeq G^{\prime }(C)
&
\end{align*}
since $\dim Z=\rank P_1-\rank C=l-l=0$, $\dim \Delta =l- \rank P_1=l-l=0$.
Thus, the reduced contragredient Lie algebra $G^{\prime }(C)$ is isomorphic to the PC Lie algebra $L(l,n;Q,P_1,\Gamma )$ with symmetric bilinear form $B_Q$.
}
\item {
Let $n $ be the order of $C$, $l$ the rank of $C$, $C_{ij}$ the $(i,j)$-entry of $C$, i.e. $C=(C_{ij})_{i,j=1,\ldots ,n}$ is an $n\times n$ square matrix with rank $l$.
We take matrices $A_{12}\in M(n,n-l)$, $A_{21}\in M(n-l,n)$, $A_{22}\in M(n-l,n-l)$ arbitrarily so that the following matrix $A$ defined by
\begin{align*}
A
=
\left (
\begin{array}{c|c}
C&A_{12}
\\
\hline
A_{21}&A_{22}
\end{array}
\right )
\in M(2n-l,2n-l)
\end{align*}
becomes an invertible matrix.
If we put 
\begin{align*}
D=
\left (
\begin{array}{c}
E_n
\\
\hline 
O_{n-l,n}
\end{array}
\right )
\in M(2n-l,n)
,
\quad \Gamma =E_n,
\end{align*}
then the pentad of Cartan type $P(2n-l,n;A,D,\Gamma )$ has Cartan matrix
\begin{align}\label{cal;car1}
\Gamma \cdot {}^t D\cdot A\cdot D=E_n\cdot {}^t D\cdot A\cdot D=\left (
\begin{array}{c|c}
E_n
&
O_{n,n-l}
\end{array}
\right )
\left (
\begin{array}{c|c}
C&A_{12}
\\
\hline
A_{21}&A_{22}
\end{array}
\right )
\left (
\begin{array}{c}
E_n
\\
\hline 
O_{n-l,n}
\end{array}
\right )
=C.
\end{align}
Here, since $A$ is invertible, note that the row vectors of $\Gamma \cdot {}^t D \cdot A\in M(n,2n-l)$ are linearly independent.
So, if we put $h_i=[e_i,f_i]\in \h ^{2n-l}$ $(i=1,\ldots ,n)$ under the notations in Definition \ref {defn;1}, the vectors $\{h_1,\ldots ,h_n\}$ are linearly independent (see \cite [Corollary 2.26]{Sa5} or \cite [Lemma 3.1 (i)]{Sa6}).
%Moreover, we have equations $[h_i,e_j]=C_{ij}e_j$, $[h_i,f_j]=-C_{ij}f_j$ $(i,j=1,\ldots ,n)$ (see \cite [Proposition 2.13]{Sa5}).
Moreover, under the identification $\h ^{2n-r}=\{(a_1,\ldots ,a_{2n-r})\mid a_i\in \C \}$, we can identify the element $h_i$ $(i=1,\ldots ,n)$ with the $i$-th row vector of $\Gamma \cdot {}^t D\cdot A=\left (\begin{array}{c|c}C&A_{12}\end{array}\right )$ (see \cite [Proposition 2.11, Definition 2.12]{Sa5}).
Thus, we have a realization of $C$ as
$$
(\h^{2n-l}, \{\alpha _1,\ldots ,\alpha _n\}, \{h_1,\ldots ,h_n\}),
$$ 
where $\alpha _j \in (\h ^{2n-l})^*$ is a linear function defined by $\alpha _j(a_1,\ldots ,a_{2n-l})=a_j$.
(Note that $\dim \h ^{2n-l}=2n-l=2(\text {the order of $C$})-\rank C$).
%From the uniqueness of realization of a generalized Cartan matrix (see \cite [Proposition 1.1]{Kac1}), we can construct $\g (C)$ using notations $e_i,f_i, \h ^{2n-r}$ in Definition \ref {defn;1}.
Let us prove that $L(2n-l,n;A,D,\Gamma )$ is isomorphic to the Kac-Moody algebra $\g (C)$.
Take a principal gradation (see \cite [\S 1.5]{Kac1})
$$
\g (C)=\bigoplus _{j\in \Z}\g _j (\mathbf {1} ),\quad \g _0(\mathbf {1})=\h ^{2n-l}, \quad \g _{-1}(\mathbf {1})=\sum _{i=1,\ldots ,n}\C f_i, \quad \g _1(\mathbf {1})=\sum _{i=1,\ldots ,n}\C e_i. 
$$
Since $L(2n-l,n;A,D,\Gamma )$ is a minimal graded Lie algebra (see \cite [the proof of Theorem 1.3]{Sa4}) \footnote {Although we assume the symmetricity of a bilinear form in the statement of \cite [Theorem 1.3]{Sa4}, it does not need to show that $L(2n-l,n;A,D,\Gamma )$ is minimal.} with local part 
\begin{align}
\left (\sum _{i=1,\ldots ,n}\C f_i\right )\oplus \h ^{2n-l}\oplus \left (\sum _{i=1,\ldots ,n}\C e_i\right )\simeq \g _{-1}(\mathbf {1})\oplus \g _{0}(\mathbf {1})\oplus \g _{1}(\mathbf {1}),\label{eq;eqkac}
\end{align}
there exists an epimorphism 
$$
\phi \colon \g (C)\rightarrow L(2n-l,n;A,D,\Gamma ), \quad \phi \mid _{\g _{-1}(\mathbf {1})\oplus \g _{0}(\mathbf {1})\oplus \g _{1}(\mathbf {1})}=\mathrm {id}
$$
(see \cite [Definition 6, the definition of minimal graded Lie algebras]{Kac2}).
Thus, $\g (C)/\Ker \phi \simeq L(2n-l,n;A,D,\Gamma )$.
Since $\Ker \phi \cap \h ^{2n-l}=\{0\}$, we have that $\Ker \phi =\{0\}$ from the definition of $\g (C)$.
Thus, we have an isomorphism $\g (C)\simeq L(2n-l,n;A,D,\Gamma )$.
}
\item {To show this claim, it is sufficient to show that the derived algebra 
$
[L(2n-l,n;A,D,\Gamma ),L(2n-l,n;A,D,\Gamma )]
$
defined above is isomorphic to $G(C)$.
Note that the derived Lie algebra
$
[L(2n-l,n;A,D,\Gamma ),L(2n-l,n;A,D,\Gamma )]
$
has a canonical structure of grading as a subalgebra of $L(2n-l,n;A,D,\Gamma )$.
In this grading, the local part is 
\begin{align*}
&
\left (\sum _{i=1,\ldots ,n}\C f_i\right )\oplus \left (\sum _{i=1,\ldots ,n}\C [e_i,f_i]\right )\oplus \left (\sum _{i=1,\ldots ,n}\C e_i\right )
&
\\
&
\quad 
\simeq 
\g _{-1}(\mathbf {1})\oplus \left (\sum _{i=1,\ldots ,n}\C [e_i,f_i]\right )\oplus \g _{1}(\mathbf {1})
\quad 
\left (
\subset \g _{-1}(\mathbf {1})\oplus \g _{0}(\mathbf {1})\oplus \g _{1}(\mathbf {1})
\right ),
&
\end{align*}
via the isomorphism (\ref{eq;eqkac}).
In fact, if not, it must exists an $e_i$ or $f_i$ for some $i$ such that $[h,e_i]=0$ or $[h,f_i]=0$ for any $h\in \h^{2n-l}$.
However, it contradicts to the assumption that $A
=
\left (
\begin{array}{c|c}
C&A_{12}
\\
\hline
A_{21}&A_{22}
\end{array}
\right )$
is invertible.
Since $\g_j$ $(j\geq 1)$ (respectively $j\leq -1$) is generated by $\g _1$ (respectively $\g _{-1}$) (see \cite [Definitions 2.8, 2.12]{Sa3}), we have an isomorphism
\begin{align}
&
[L(2n-l,n;A,D,\Gamma ),L(2n-l,n;A,D,\Gamma )]\simeq \left (\bigoplus _{j\neq 0}\g _j (\mathbf {1} )\right )\oplus \left (\sum _{i=1,\ldots ,n}\C [e_i,f_i]\right )
\label{iso;dergraded}
&
\\
&
\quad 
\left (
\subset \bigoplus _{j\in \Z}\g _j (\mathbf {1} )=\g (C)\simeq L(2n-l,n;A,D,\Gamma )
\right ).\notag
&
\end{align}
The derived Lie algebra $[L(2n-l,n;A,D,\Gamma ),L(2n-l,n;A,D,\Gamma )]$ is minimal.
Indeed, since $L(2n-l,n;A,D,\Gamma )$ is transitive, the derived Lie algebra $[L(2n-l,n;A,D,\Gamma ),L(2n-l,n;A,D,\Gamma )]$ is also transitive.
It means the derived Lie algebra $[L(2n-l,n;A,D,\Gamma ),L(2n-l,n;A,D,\Gamma )]$ is minimal (see \cite [p.1278 Proposition 5]{Kac2}).
Thus, from (\ref {cal;car1}), we have that the graded Lie algebra (\ref {iso;dergraded}) is isomorphic to the contragredient Lie algebra whose Cartan matrix is $C$.
This finishes the proof.
}
\end{enumerate}
\end{proof}
From Theorem \ref{maintheo1}, we can say that the class of Kac-Moody algebras is contained in the class of PC Lie algebras.
\par 

\section {A construction of ``HUGE'' Lie algebra which contains all finite-dimensional representations of $\sl _2$}
Using the notion of PC Lie algebras, we can construct a ``LARGE'' Lie algebra from a given Kac-Moody algebra and its representation, moreover, we can explicitly express such Lie algebra under the assumption of Theorem \ref{theo;pcchain}.
For example, a ``LARGE'' Lie algebra which is constructed by $\sl _2 $ and its $(n+1)$-dimensional irreducible representation is isomorphic to 
$$
G\left (
\left (
\begin{array}{cc}
2&-n\\-n&n^2/2
\end{array}
\right )
\right )
$$
(see \cite [Example 3.7]{Sa6}).
In the remain part of this paper, we shall extend this example.
\subsection {some properties and notations of $\sl _2$ and its representations}
%As we have seen, for a given finite-dimensional reductive Lie algebra and its representation, we have a graded Lie algebra which contains them.
Here, let us consider a graded Lie algebra which contains Lie algebra $\sl _2$ and its ``infinitely many'' representations.
First, let us recall some fundamental properties of $\sl _2$ and its representations (defined over $\C$):
\begin{itemize}
\item {the Lie algebra $\sl _2 $ is simple},
\item {thus, any finite-dimensional representation of $\sl _2$ is completely reducible},
\item {for any positive integer $n$, there exists an irreducible representation whose dimension is $n$}, 
\item {such irreducible representation is determined uniquely up to equivalence}.
\end{itemize}
Next, we shall introduce the following notations used in the remain part of this paper.
\footnote {
We import these notations from the theory of prehomogeneous vector spaces.
}
\begin{defn}
We let $\g =\sl _2$.
We denote the $(n+1)$-dimensional irreducible representation of $\sl _2$ by $n\Lambda _1$ up to equivalence.
We denote the representation space of $n\Lambda _1$ by $V(n+1)$.
These spaces $V(n+1)$ $(n=0,1,2,\ldots )$ has dimension $n+1$, i.e., $\dim V(n+1)=n+1$.
\end{defn}
\begin{ex}
We denote by $0\Lambda _1$ a representation which is equivalent to the trivial representation on $1$-dimensional vector space.
We denote by $1\Lambda _1$ a representation which is equivalent to the canonical representation of $\sl _2$ on the $2$-dimensional vector space of column vectors.
We denote by $2\Lambda _1$ the representation which is equivalent to the representation of $\sl _2$ on $\mathrm {Sym} _2$, where $\mathrm {Sym} _2$ is the $3$-dimensional space consisted by the space of all symmetric matrices of size $2$, defined by
\begin{align*}
&\sl _2\curvearrowright \mathrm {Sym} _2\rightarrow \mathrm {Sym} _2
\\
&A\curvearrowright X\mapsto AX+X{}^tA.
\end{align*}
\end{ex}
\begin{ex}
The following representations $\rho, \sigma $ of $\sl _2$ on $\mathrm {M} _2$, where $\mathrm {M}_2$ is the space of all matrices of size $2$, defined by
\begin{align*}
&
\rho \colon 
\begin{cases}
&\sl _2\curvearrowright \mathrm {M} _2\rightarrow \mathrm {M} _2
\\
&A\curvearrowright X\mapsto AX+X{}^tA
\end{cases}
&
&
\sigma \colon 
\begin{cases}
&\sl _2\curvearrowright \mathrm {M} _2\rightarrow \mathrm {M} _2
\\
&A\curvearrowright X\mapsto AX
\end{cases}
&
\end{align*}
are respectively denoted by $\rho \simeq 2\Lambda _1+0\Lambda _1$, $\sigma \simeq 1\Lambda _1\oplus 1\Lambda _1$.
\end{ex}
The following claim on ``highest weight'' is well-known.
We can easily check that a similar claim holds on ``lowest weight''.
Here, we let $(y,h,x)$ be a basis of $\sl _2$ satisfying 
\begin{align*}
&
[h,y]=-2y,
&
&
[h,x]=2x,
&
&
[x,y]=h.
&
\end{align*}
\begin{pr}
\label{pr;sl2lowestweightvector}
There exists a ``highest'' (respectively ``lowest'') weight vector $v^{n\Lambda _1}\in V(n+1)$ $(\text {respectively }v_{n\Lambda _1}\in V(n+1))$ which satisfies the following 2 conditions:
\begin{itemize}
\item{
$n\Lambda _1 (x\otimes v^{n\Lambda _1})=0\qquad (\text {respectively }n\Lambda _1(y\otimes v_{n\Lambda _1})=0),$
}
\item {
the representation space $V(n+1)$ is generated by $v^{n\Lambda _1}$ and $h,y\in \sl_2$ (respectively $v_{n\Lambda _1}$ and $h,x\in \sl_2$).
}
\end{itemize}
\end{pr}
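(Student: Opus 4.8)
The plan is to run the classical highest-weight argument for $\sl_2$, adapted to the bracket conventions fixed above; throughout I abbreviate the action by writing $a.v := n\Lambda_1(a \otimes v)$ for $a \in \sl_2$ and $v \in V(n+1)$. First I would produce a vector annihilated by $x$. Starting from any $h$-eigenvector $w$ --- one exists since $V(n+1)$ is a nonzero finite-dimensional space over the algebraically closed field $\C$ --- the relation $[h,x]=2x$ gives $h.(x^k.w) = (\mu + 2k)\,x^k.w$, where $\mu$ is the eigenvalue of $w$; hence the nonzero vectors among $w, x.w, x^2.w, \ldots$ have pairwise distinct $h$-eigenvalues and are therefore linearly independent. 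By finite-dimensionality there is a smallest $m \geq 1$ with $x^m.w = 0$, and then $v^{n\Lambda_1} := x^{m-1}.w$ is a nonzero $h$-eigenvector, of some eigenvalue $\lambda$, satisfying $x.v^{n\Lambda_1} = 0$. This is the first condition of the highest-weight part.

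Next I would show that the subspace $U \subset V(n+1)$ spanned by the iterates $v^{n\Lambda_1}, y.v^{n\Lambda_1}, y.(y.v^{n\Lambda_1}), \ldots$ is an $\sl_2$-submodule. Stability under $y$ is immediate, and stability under $h$ follows because, by $[h,y]=-2y$, the $k$-th iterate $y^k.v^{n\Lambda_1}$ is again an $h$-eigenvector, of eigenvalue $\lambda - 2k$. The only nontrivial point is stability under $x$: here I would expand the operator $[x, y^k]$ on $V(n+1)$ as a sum of terms $y^j\, h\, y^{k-1-j}$ via $[x,y]=h$, and then evaluate on $v^{n\Lambda_1}$ using $x.v^{n\Lambda_1} = 0$ together with the eigenvalue computation just made. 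This yields $x.(y^k.v^{n\Lambda_1}) = c_k\,(y^{k-1}.v^{n\Lambda_1})$ for an explicit scalar $c_k$, so $U$ is stable under $x$ as well.

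Then, since $V(n+1)$ is irreducible and $U \neq \{0\}$ is a submodule, I conclude $U = V(n+1)$; in particular $V(n+1)$ is generated by $v^{n\Lambda_1}$ together with $h, y \in \sl_2$, which is the second condition. The lowest-weight statement is entirely symmetric: under the given relations the assignment $x \leftrightarrow y$, $h \leftrightarrow -h$ extends to an automorphism of $\sl_2$, so taking a nonzero $h$-eigenvector $v_{n\Lambda_1}$ with $y.v_{n\Lambda_1} = 0$ (obtained by the same annihilation argument with $y$ in place of $x$) and iterating $x$ exhausts $V(n+1)$ by irreducibility.

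I expect the main obstacle to be the verification that $U$ is stable under the raising operator $x$: this is the one direction that does not follow directly from the definition of $U$, and it is exactly where the commutation relation $[x,y]=h$ and the vanishing $x.v^{n\Lambda_1} = 0$ must be combined through the computation of $[x,y^k]$. The production of the extremal vector and the final irreducibility step are routine.
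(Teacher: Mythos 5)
Your proof is correct and complete. For comparison: the paper does not actually prove this proposition at all --- it is introduced with the remark that the claim on highest weight is ``well-known'' (and that the lowest-weight analogue can be checked similarly), and no argument is supplied. What you have written is precisely the standard $\sl_2$ highest-weight-theory argument that this appeal to folklore points to: produce an extremal vector by applying $x$ to an $h$-eigenvector until it dies (finite-dimensionality plus the strictly increasing eigenvalues $\mu+2k$ forces this), then show that the span of $y^k.v^{n\Lambda_1}$ is an $\sl_2$-submodule via the commutator identity $[x,y^k]=\sum_{j=0}^{k-1}y^j h\, y^{k-1-j}$ evaluated on the extremal vector, and conclude by irreducibility of $V(n+1)$. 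All the steps check out under the paper's sign conventions $[h,y]=-2y$, $[h,x]=2x$, $[x,y]=h$, including your closing symmetry argument: the assignment $x\leftrightarrow y$, $h\mapsto -h$ is indeed an automorphism of $\sl_2$ with these brackets, so the lowest-weight case follows formally (or, as you also note, by rerunning the annihilation argument with $y$ in place of $x$). The only value added by your write-up over the paper is self-containedness; there is no divergence in substance, since the paper's implicit reference is to exactly this classical argument.
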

\begin{pr}
The highest (respectively lowest) weight vector $v^{n\Lambda _1}\in V(n+1)$ (respectively $v_{n\Lambda _1}\in V(n+1)$) satisfies
\begin{align*}
n\Lambda _1(h\otimes v^{n\Lambda _1})=nv^{n\Lambda _1}
\qquad (\text {respectively }n\Lambda _1(h\otimes v_{n\Lambda _1})=-nv_{n\Lambda _1}).
\end{align*}
\end{pr}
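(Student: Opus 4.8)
The plan is to realise $V(n+1)$ explicitly as the standard ``ladder'' generated by the highest weight vector, and to read off the eigenvalue from the constraint that this ladder has exactly $n+1$ rungs. Throughout I would abbreviate the operators on $V(n+1)$ by $X=n\Lambda _1(x\otimes \cdot )$, $Y=n\Lambda _1(y\otimes \cdot )$, $H=n\Lambda _1(h\otimes \cdot )$, so that the defining relations of $\sl _2$ become $HY-YH=-2Y$, $XY-YX=H$ and $HX-XH=2X$, and write $v:=v^{n\Lambda _1}$. Being a weight vector, $v$ is an eigenvector of $H$, say $Hv=\lambda v$, and by Proposition \ref{pr;sl2lowestweightvector} we have $Xv=0$ together with the fact that $V(n+1)$ is generated by $v$ under the action of $H$ and $Y$.

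First I would record the two ladder identities
\begin{align*}
HY^jv=(\lambda -2j)Y^jv,\qquad XY^jv=j(\lambda -j+1)Y^{j-1}v,
\end{align*}
both established by induction on $j$. The first uses $HY=YH-2Y$; the second is the computation $XY^jv=(YX+H)Y^{j-1}v=\big[(j-1)(\lambda -j+2)+(\lambda -2j+2)\big]Y^{j-1}v=j(\lambda -j+1)Y^{j-1}v$, whose base case is supplied by $Xv=0$.

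Next I would use finite-dimensionality. The nonzero vectors among $v,Yv,Y^2v,\ldots$ lie in distinct $H$-eigenlines (eigenvalues $\lambda -2j$), hence are linearly independent, so there is a largest index $m$ with $Y^mv\neq 0$ and $Y^{m+1}v=0$. The span $W$ of $v,Yv,\ldots ,Y^mv$ is stable under $Y$ trivially, under $H$ since it is a sum of eigenlines, and under $X$ by the second identity, so $W$ is an $\sl _2$-submodule; as $W$ contains $v$ and is $H,Y$-stable, the generation clause of Proposition \ref{pr;sl2lowestweightvector} forces $W=V(n+1)$, whence $m+1=\dim V(n+1)=n+1$, i.e. $m=n$. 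Finally, applying $X$ to $Y^{m+1}v=0$ and using the second identity gives $0=XY^{m+1}v=(m+1)(\lambda -m)Y^mv$; since $Y^mv\neq 0$ this yields $\lambda =m=n$, which is precisely $Hv=nv$. As a sanity check one may instead note $\Tr H=\Tr (XY-YX)=0$, while the weights just found sum to $(n+1)\lambda -2\cdot \tfrac{n(n+1)}2=(n+1)(\lambda -n)$, again forcing $\lambda =n$.

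For the lowest weight statement I would invoke the involutive automorphism of $\sl _2$ determined by $x\mapsto -y$, $y\mapsto -x$, $h\mapsto -h$, which carries a highest weight vector of $V(n+1)$ to a lowest weight vector and negates every $h$-eigenvalue, thereby converting the eigenvalue $n$ into $-n$; equivalently one repeats the ladder argument verbatim with the roles of $X$ and $Y$ interchanged. The only genuinely delicate point is the bookkeeping of the third paragraph, namely that the $H,Y$-orbit of $v$ already closes up under $X$ and so spans an $\sl _2$-submodule equal to all of $V(n+1)$; once this is secured the eigenvalue is pinned down purely by the length of the ladder, and no representation-theoretic input beyond Proposition \ref{pr;sl2lowestweightvector} is required.
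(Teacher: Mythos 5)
Your proof is correct. There is nothing in the paper to compare it against: the paper's own proof of this proposition reads, in its entirety, ``We can easily check them by easy calculations,'' so you have supplied the argument the paper omits. What you give is the classical ladder computation, and the details check out: the induction $XY^jv=\bigl[(j-1)(\lambda -j+2)+(\lambda -2j+2)\bigr]Y^{j-1}v=j(\lambda -j+1)Y^{j-1}v$ is right, and the key structural step --- that the span $W$ of $v,Yv,\ldots ,Y^mv$ is $X$-stable by that identity and $H,Y$-stable by construction, so the generation clause of Proposition \ref{pr;sl2lowestweightvector} forces $W=V(n+1)$ and hence $m=n$ --- is exactly what converts the abstract dimension count $\dim V(n+1)=n+1$ into the eigenvalue $\lambda =n$ via $(m+1)(\lambda -m)Y^mv=0$. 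Notably, your argument uses only the two properties listed in Proposition \ref{pr;sl2lowestweightvector} together with finite-dimensionality, not irreducibility of $V(n+1)$, which makes it slightly more general than what is needed. The treatment of the lowest-weight case is also sound: the map $x\mapsto -y$, $y\mapsto -x$, $h\mapsto -h$ is indeed an automorphism of $\sl _2$ (one checks $[x,y]=h\mapsto [-y,-x]=-h$), and twisting by it exchanges highest- and lowest-weight vectors while negating $h$-eigenvalues; alternatively your remark that the ladder argument runs verbatim with $X$ and $Y$ interchanged is correct and avoids invoking uniqueness of the irreducible $(n+1)$-dimensional module altogether. The trace identity $\Tr H=\Tr (XY-YX)=0$ is a nice independent confirmation but is not needed.
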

\begin{proof}
We can easily check them by easy calculations.
\end{proof}

\subsection {all finite-dimensional representation of $\sl _2$ and a graded Lie algebra}
We shall consider a graded Lie algebra which contains ``infinitely many'' $n\Lambda _1$'s.
\begin{pr}\label {pr;sl2_fd_mod}
Any finite-dimensional representation of $\sl _2$ can be written in the form of 
$$
n_1\Lambda _1\oplus n_2\Lambda _1\oplus \cdots \oplus n_k\Lambda _1
$$
for some $k\geq 1$, $n_1,\ldots ,n_k\in \N \cup \{0\}$ (the integers $n_i$'s are NOT necessarily $n_i\neq n_j$ $(i\neq j)$).
\end{pr}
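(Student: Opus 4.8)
The plan is to deduce the statement directly from the two structural facts about $\sl _2$ recalled at the start of this subsection: that every finite-dimensional representation of $\sl _2$ is completely reducible (since $\sl _2$ is simple), and that for each positive integer $m$ there is, up to equivalence, exactly one irreducible representation of dimension $m$, namely $(m-1)\Lambda _1$ with space $V(m)$.

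First I would take an arbitrary finite-dimensional representation $(\pi ,W)$ of $\sl _2$. By complete reducibility, $W$ decomposes as a direct sum of nonzero irreducible $\sl _2$-submodules
$$
W\simeq W_1\oplus W_2\oplus \cdots \oplus W_k
$$
for some $k\geq 1$, where $k=1$ is allowed (if $W$ is already irreducible) and the trivial $1$-dimensional module may appear among the $W_i$. Next I would identify each summand: each $W_i$ is irreducible of some dimension $m_i\geq 1$, so by the uniqueness of irreducibles in each dimension we may set $n_i=m_i-1\in \N \cup \{0\}$ and conclude $W_i\simeq n_i\Lambda _1$. Assembling these gives
$$
W\simeq n_1\Lambda _1\oplus n_2\Lambda _1\oplus \cdots \oplus n_k\Lambda _1,
$$
which is precisely the asserted form; the $n_i$ need not be pairwise distinct, since a given irreducible may occur with multiplicity greater than one.

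I do not expect any genuine obstacle here, since both ingredients are exactly the facts listed immediately before the proposition, and the only content is to combine them. If one wished to avoid citing complete reducibility as a black box, the real work would shift to proving that theorem for $\sl _2$ directly (for instance via the Casimir element, by showing that any short exact sequence of finite-dimensional $\sl _2$-modules splits), but in the present setting that property is already granted, so it need not be reproved.
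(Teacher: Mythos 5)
Your proof is correct and is exactly the argument the paper has in mind: the paper simply declares the claim ``immediate,'' relying on the same two facts (complete reducibility of finite-dimensional $\sl _2$-modules and uniqueness of the irreducible of each dimension) that it lists just before the proposition and that you combine explicitly. Nothing further is needed.
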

\begin{proof}
This claim is immediate.
\end{proof}

\begin{defn}
We define a representation $\rho _{\mathrm {f.d.}}$ of $\sl _2$ by
\begin{align*}
\rho _{\mathrm {f.d.}}
&=(0\Lambda _1\oplus 0\Lambda _1\oplus \cdots )\oplus (1\Lambda _1\oplus 1\Lambda _1\oplus \cdots )\oplus \cdots \oplus (n\Lambda _1\oplus n\Lambda _1\oplus \cdots )\oplus \cdots 
\\
& =\bigoplus _{n=0}^{\infty }\left (\text {the direct sum of countable many copies of $n\Lambda _1$}\right ),
\end{align*}
and denote the representation space of $\rho _{\mathrm {f.d.}}$ by $U_{\mathrm {f.d.}}$.
\end{defn}
\begin{pr}
We have the following claims.
\begin{enumerate}
\item {
Any finite-dimensional representation of $\sl _2$ is a subrepresentation of $\rho _{\mathrm {f.d.}}$.
}
\item {
The representation space $U_{\mathrm {f.d.}}$ is canonically regarded as a submodule of $\Hom (U_{\mathrm {f.d.}},\C)$.
}
\end{enumerate}
\end{pr}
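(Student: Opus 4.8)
The plan is to handle the two claims separately: the first is a direct consequence of the complete reducibility recorded in Proposition \ref{pr;sl2_fd_mod}, while the second rests on the self-duality of finite-dimensional $\sl_2$-modules.

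For the first claim I would begin with an arbitrary finite-dimensional representation $W$ of $\sl_2$ and apply Proposition \ref{pr;sl2_fd_mod} to write $W\simeq n_1\Lambda_1\oplus\cdots\oplus n_k\Lambda_1$. Collecting equal summands, suppose $n\Lambda_1$ occurs with finite multiplicity $m_n$ in this decomposition. Since $\rho_{\mathrm{f.d.}}$ contains by definition countably many copies of each $n\Lambda_1$ and $m_n<\infty$, I can send the $m_n$ copies of $n\Lambda_1$ in $W$ to $m_n$ distinct copies inside $U_{\mathrm{f.d.}}$; carrying this out for every $n$ yields an injective $\sl_2$-module homomorphism $W\hookrightarrow U_{\mathrm{f.d.}}$, which exhibits $W$ as a subrepresentation of $\rho_{\mathrm{f.d.}}$.

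For the second claim the essential input is that each finite-dimensional irreducible $V(n+1)$ is self-dual, i.e. $V(n+1)\simeq\Hom(V(n+1),\C)$ as $\sl_2$-modules: the contragredient of $n\Lambda_1$ is again $(n+1)$-dimensional with weights symmetric about $0$, hence of highest weight $n$, so by the uniqueness up to equivalence of the irreducible of dimension $n+1$ it is equivalent to $n\Lambda_1$ (the highest/lowest weight description of Proposition \ref{pr;sl2lowestweightvector} makes this explicit). Equivalently, each $V(n+1)$ carries a nondegenerate invariant bilinear form $B_n$. These forms assemble into a nondegenerate invariant bilinear form $B$ on $U_{\mathrm{f.d.}}$, well defined because every element has finite support, and the resulting map
\[
U_{\mathrm{f.d.}}\longrightarrow\Hom(U_{\mathrm{f.d.}},\C),\qquad v\longmapsto B(v,\,\cdot\,),
\]
is injective by nondegeneracy and $\sl_2$-equivariant by invariance. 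Its image is precisely the restricted dual $\bigoplus_{n}(\text{copies of }\Hom(V(n+1),\C))$, consisting of those functionals vanishing on all but finitely many homogeneous summands, which is $\sl_2$-stable since the action preserves each summand. This realizes $U_{\mathrm{f.d.}}$ canonically as a submodule of $\Hom(U_{\mathrm{f.d.}},\C)$.

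The obstacle I anticipate is conceptual rather than computational: one must carefully distinguish the full dual $\Hom(U_{\mathrm{f.d.}},\C)$, which is an uncountable-dimensional direct product, from the finite-support restricted dual that is the actual image, and check that the latter is a genuine $\sl_2$-submodule of the former. Once the self-duality of each $V(n+1)$ is granted and this finite-support distinction is kept straight, the remaining verifications are routine; the only bookkeeping point is the sign convention for the contragredient action inherited from the footnote to Definition \ref{defn;stap}, which does not affect the isomorphism type.
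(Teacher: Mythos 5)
Your proposal is correct and follows essentially the same route as the paper: the first claim via the decomposition into irreducibles with finite multiplicities, and the second via the self-duality $V(n+1)\simeq\Hom(V(n+1),\C)$ of each irreducible, assembled into a natural pairing $U_{\mathrm{f.d.}}\times U_{\mathrm{f.d.}}\rightarrow\C$ that identifies $U_{\mathrm{f.d.}}$ with a submodule of its dual. The paper states both steps very tersely (the first is dismissed as "immediate"), so your added care about finite support and the restricted dual is simply a more explicit rendering of the same argument.
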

\begin{proof}
\begin{enumerate}
\item {This claim is immediate.}
\item {Since the dual space $\Hom (V(n+1),\C)$ of $n\Lambda _1$ $(n=0,1,2,\ldots )$ is isomorphic to $n\Lambda _1$, we have a natural pairing 
$$
U_{\mathrm {f.d.}}\times U_{\mathrm {f.d.}}\rightarrow \C.
$$
Thus, $U_{\mathrm {f.d.}}$ is canonically identified with a submodule of $\Hom(U_{\mathrm {f.d.}},\C)$.
}
\end{enumerate}
\end{proof}
Under these, let us consider a graded Lie algebra which contains the Lie algebra $\sl _2$ and its representation $U_{\mathrm {f.d.}}$.
\begin{defn}
We have a standard pentad $(\sl _2,\rho _{\mathrm {f.d.}}, U_{\mathrm {f.d.}}, U_{\mathrm {f.d.}}\ (\simeq \Hom (U_{\mathrm {f.d.}}, \C)), K_{\sl _2})$, where the bilinear form $K_{\sl _2}$ on $\sl _2$ is the Killing form\footnote {For the reason why this pentad is standard, see \cite [Lemma 2.3]{Sa3}.}.
We put
\begin{align*}
L(\sl _{2},\mathrm {f.d.}):=L(\sl _2,\rho _{\mathrm {f.d.}}, U_{\mathrm {f.d.}}, U_{\mathrm {f.d.}}, K_{\sl _2}), 
\end{align*}
i.e., $L(\sl _{2},\mathrm {f.d.})$ is the corresponding Lie algebra to the standard pentad $(\sl _2,\rho _{\mathrm {f.d.}}, U_{\mathrm {f.d.}}, U_{\mathrm {f.d.}}, K_{\sl _2})$.
\end{defn}
Let us consider the structure of this Lie algebra $L(\sl _{2},\mathrm {f.d.})$.
This Lie algebra $L(\sl _{2},\mathrm {f.d.})$ has a canonical structure of a graded Lie algebra:
\begin{align}
L(\sl _{2},\mathrm {f.d.})= \bigoplus _{n\in \Z }L_n
\label {sl2fd_1}
\end{align}
such that 
\begin{align*}
&
L_0\simeq \sl _2\qquad (\text {as Lie algebras}),
&
\\
&
(L_0,\ad,L_1)\simeq (\sl _2,\rho _{\mathrm {f.d.}}, U_{\mathrm {f.d.}}),
\quad 
(L_0,\ad,L_{-1})\simeq (\sl _2,\rho _{\mathrm {f.d.}}, U_{\mathrm {f.d.}})
\quad (\text {as $L_0\simeq \sl_2$-modules}).
&
\end{align*}
From the definition of $L(\sl _{2},\mathrm {f.d.})$, we have the following proposition immediately.
\begin{pr}
Let $(\sigma ,U)$ be an arbitrary finite-dimensional representation of $\sl_2$.
Then $L(\sl _{2},\mathrm {f.d.})=\bigoplus _{n\in \Z }L_n$ has an $L_0$-submodule $W\subset L_1$ such that $(\sigma ,U)\simeq (\ad,W)$ as $\sl _2\simeq L_0$-modules.
\end{pr}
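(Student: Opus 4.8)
The plan is to reduce the statement to the structural identification of the degree-one component of $L(\sl_2,\mathrm{f.d.})$ recorded just above, namely the isomorphism of triples
$$
(L_0,\ad,L_1)\simeq (\sl_2,\rho_{\mathrm{f.d.}},U_{\mathrm{f.d.}}),
$$
combined with the decomposition of an arbitrary finite-dimensional $\sl_2$-module supplied by Proposition \ref{pr;sl2_fd_mod}. First I would apply Proposition \ref{pr;sl2_fd_mod} to write $(\sigma,U)\simeq n_1\Lambda_1\oplus\cdots\oplus n_k\Lambda_1$ for suitable $k\geq 1$ and $n_1,\ldots,n_k\in\N\cup\{0\}$, where the $n_i$ need not be pairwise distinct.

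Next I would exhibit a copy of this module inside $U_{\mathrm{f.d.}}$. Since $\rho_{\mathrm{f.d.}}=\bigoplus_{n=0}^{\infty}(\text{countably many copies of }n\Lambda_1)$, for each index $i$ the block indexed by $n_i$ contains more than enough copies of $n_i\Lambda_1$; selecting one such copy for every $i$, and using \emph{distinct} copies whenever some values $n_i$ coincide, produces an $\sl_2$-submodule $W'\subset U_{\mathrm{f.d.}}$ with $W'\simeq n_1\Lambda_1\oplus\cdots\oplus n_k\Lambda_1\simeq U$. This is precisely the earlier claim that every finite-dimensional representation of $\sl_2$ is a subrepresentation of $\rho_{\mathrm{f.d.}}$.

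Finally I would transport $W'$ through the isomorphism $(\sl_2,\rho_{\mathrm{f.d.}},U_{\mathrm{f.d.}})\simeq(L_0,\ad,L_1)$. Letting $W\subset L_1$ be the image of $W'$, the fact that this isomorphism intertwines $\rho_{\mathrm{f.d.}}$ with $\ad$ shows at once that $W$ is an $L_0$-submodule of $L_1$ and that $(\ad,W)\simeq(\rho_{\mathrm{f.d.}}\!\mid_{W'},W')\simeq(\sigma,U)$ as $L_0\simeq\sl_2$-modules, which is the assertion.

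I do not expect any genuine obstacle here: the entire content sits in the two inputs, the graded structure of $L(\sl_2,\mathrm{f.d.})$ and the classification of finite-dimensional $\sl_2$-modules, both already available. The only point deserving a word of care is the multiplicity bookkeeping, that is, guaranteeing that when $(\sigma,U)$ contains several copies of the same $n\Lambda_1$ one can still pick that many distinct summands inside $U_{\mathrm{f.d.}}$; this is immediate from the defining property that $\rho_{\mathrm{f.d.}}$ contains countably many copies of each irreducible.
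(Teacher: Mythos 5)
Your proposal is correct and follows essentially the same route as the paper, which states the result as immediate from the definition of $L(\sl_2,\mathrm{f.d.})$: the identification $(L_0,\ad,L_1)\simeq(\sl_2,\rho_{\mathrm{f.d.}},U_{\mathrm{f.d.}})$ together with the previously recorded fact that every finite-dimensional representation of $\sl_2$ is a subrepresentation of $\rho_{\mathrm{f.d.}}$. Your write-up merely makes explicit the multiplicity bookkeeping and the transport of the submodule through the isomorphism, which is exactly the reasoning the paper leaves implicit.
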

That is, $L(\sl _{2},\mathrm {f.d.})$ is a graded Lie algebra which contains ``all'' finite-dimensional representation of $\sl _2$.
%\par
%Here, let us consider the inverse.
%\begin{pr}
%We take a graded Lie algebra
%$$
%\L=\bigoplus _{n\in \Z }\L_n 
%$$
%which satisfies the following condition:
%\begin{itemize}
%\item [$(P_1)$]{
%the $0$-component $\L_0$, which is a subalgebra of $\L$, is isomorphic to $\sl _2$:
%\begin{align*}
%&
%\L_0\simeq \sl _2\quad (\text {as Lie algebras}),
%&
%\end{align*}
%}
%\item [$(P_2)$]{
%for any finite-dimensional representation $(\sl _2,\sigma, V)$ of $\sl _2$, there exists a vector subspace $W\subset \L_1$ such that 
%\begin{align*}
%&
%[\L_0,W]\subset W
%\quad 
%\text {and}
%\quad 
%(\L_0,\ad, W)\simeq (\sl_2, \sigma, V)\quad (\text {as $\L_0\simeq \sl_2$-modules})
%&
%\end{align*}
%}
%\end{itemize}
%(Note that $L(\sl _{2},\mathrm {f.d.})$ is an example which satisfies these conditions. So, there exists at least more than one such graded Lie algebra.).
%Then, $\L$ contains $L(\sl _{2},\mathrm {f.d.})$. In this sence, the graded Lie algebra $L(\sl _{2},\mathrm {f.d.})$ is ``minimal'' in the class of graded Lie algebras who contains all representations of $\sl _2$.
%\end{pr}
%\begin{proof}
%Let us take an arbitrary graded Lie algebra
%$
%\L=\bigoplus _{n\in \Z }\L_n 
%$
%satisfying the above conditions ${\rm (P_1)}$ and ${\rm (P_2)}$.
%\end{proof}
Here, we have a natural question that ``how to express the structure of $L(\sl _{2},\mathrm {f.d.})$?''.
\par
We can answer this question using chain rule (\cite [Theorem 3.26]{Sa3}) as follows.
\begin{defn}\label {defn;Chrep}
Let $(y,h,x)$ be a basis of $\sl _2$ satisfying 
\begin{align*}
&[h,y]=-2y,&
&[h,x]=2x,&
&[x,y]=h.&
\end{align*}
Let $\h =\C h $ be a Cartan subalgebra of $\sl _2$.
We put symbols $\{e_{i,j}^+\}_{i,j\in \N \cup \{0\}}$, $\{e_{i,j}^-\}_{i,j\in \N \cup \{0\}}$, and we set vector spaces 
$$
\bigoplus _{i,j\in \N \cup \{0\}}\C e_{i,j}^+
=
\bigoplus _{i=0}^{\infty }
\left(
\bigoplus _{j=0}^{\infty }
\C e_{i,j}^+
\right ),
\qquad 
\bigoplus _{i,j\in \N \cup \{0\}}\C e_{i,j}^-
=
\bigoplus _{i=0}^{\infty }
\left(
\bigoplus _{j=0}^{\infty }
\C e_{i,j}^-
\right ).
$$
We define representations $\rho _{\h}^{+}$, $\rho _{\h}^{-}$ of $\h$ on $\bigoplus _{i,j\in \N \cup \{0\}}\C e_{i,j}^+$  and on $\bigoplus _{i,j\in \N \cup \{0\}}\C e_{i,j}^-$ by 
$$
\rho _{\h}^{+}(h\otimes e_{i,j})=ie_{i,j}^+,
\qquad
\rho _{\h}^{-}(h\otimes e_{i,j})=-ie_{i,j}^-.
$$
\end{defn}
\begin{pr}\label {pr;sl_2_univ_0}
We retain to use the notations in Definition \ref {defn;Chrep}.
The Lie algebra $L(\sl _{2},\mathrm {f.d.})$ is isomorphic to 
\begin{align}\label{eq;sl2_univ_0}
L\left (\C h, \ad \oplus \rho _{\h}^{-}, \left (\C x \oplus \bigoplus _{i,j\in \N \cup \{0\}}\C e_{i,j}^-\right ), \left (\C y\oplus \bigoplus _{i,j\in \N \cup \{0\}}\C e_{i,j}^+\right ), K_{\sl _2}\mid _{\C h }\right )
\end{align}
up to grading.
Here, a pairing $\langle \cdot, \cdot \rangle$ between  $\left (\C x\oplus \bigoplus _{i,j\in \N \cup \{0\}}\C e_{i,j}^-\right )$ and $\left (\C y \oplus \bigoplus _{i,j\in \N \cup \{0\}}\C e_{i,j}^+\right )$ is defined by
\begin{align*}
&\langle x,e_{i,j}^+\rangle =\langle e_{i,j}^-,y\rangle =0,& 
&\langle x,y\rangle =4,& 
&\langle e_{i,j}^-,e_{k,l}^+\rangle =4\delta _{(i,j),(k,l)}&
&(\text {for all $i,j,k,l\in \N\cup \{0\}$}).&
\end{align*}
\end{pr}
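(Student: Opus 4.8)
The plan is to descend from $\sl _2$ to its Cartan subalgebra $\h =\C h$ and apply the chain rule \cite[Theorem 3.26]{Sa3}, which re-expresses an iterated standard-pentad construction as a single construction over a smaller base. The first step is to recognize $\sl _2$ itself as the Lie algebra associated with a standard pentad over $\h $. With the adjoint action the pentad $(\C h,\ad ,\C x,\C y,K_{\sl _2}\mid _{\C h})$ is standard: the canonical pairing $\C x\times \C y\to \C $ is non-degenerate, $K_{\sl _2}\mid _{\C h}$ is non-degenerate on the one-dimensional space $\C h$, and $\Phi (x\otimes y)=[x,y]=h$ furnishes the $\Phi $-map. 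Since $[x,x]=0$, the associated minimal graded Lie algebra terminates at degrees $-1,0,1$, whence $L(\C h,\ad ,\C x,\C y,K_{\sl _2}\mid _{\C h})\simeq \sl _2$. Taking $a=h$ in condition (2) of Definition \ref{defn;stap} gives $K_{\sl _2}(h,h)=\langle [h,x],y\rangle =2\langle x,y\rangle $, and as $K_{\sl _2}(h,h)=8$ we obtain $\langle x,y\rangle =4$, the first of the claimed pairing values.

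Next I would treat the module side. By Proposition \ref{pr;sl2lowestweightvector}, each copy of $n\Lambda _1$ is generated over $\sl _2$ by its lowest weight vector $v_{n\Lambda _1}$ together with $h$ and $x$; equivalently $V(n+1)$ is spanned by $v_{n\Lambda _1},xv_{n\Lambda _1},x^2v_{n\Lambda _1},\ldots $. Hence, once $\sl _2$ is presented over $\h $ with $\C x$ as its degree-one part, the full module $U_{\mathrm {f.d.}}$ is regenerated from the $\h $-span of its lowest weight vectors by repeated bracketing with $\C x$. Indexing the $j$-th copy of $i\Lambda _1$ so that its lowest weight vector is $e_{i,j}^-$ (of $\h $-weight $-i$, since $n\Lambda _1(h\otimes v_{n\Lambda _1})=-nv_{n\Lambda _1}$) and the highest weight vector of the dual copy is $e_{i,j}^+$ (of weight $+i$), the new $\h $-module becomes $\C x\oplus \bigoplus _{i,j}\C e_{i,j}^-$ with action $\ad \oplus \rho _{\h }^-$, and its dual $\C y\oplus \bigoplus _{i,j}\C e_{i,j}^+$. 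Applying the chain rule to the pair consisting of $\sl _2\simeq L(\C h,\ad ,\C x,\C y,K_{\sl _2}\mid _{\C h})$ and the standard pentad $(\sl _2,\rho _{\mathrm {f.d.}},U_{\mathrm {f.d.}},U_{\mathrm {f.d.}},K_{\sl _2})$ then yields the asserted isomorphism up to grading.

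It remains to verify the pairing. Since the pairing on $V'\times \V'$ is $\h $-invariant, a vector of weight $\mu $ pairs non-trivially only with a vector of dual weight $-\mu $; as $x$ has weight $2$, $y$ weight $-2$, $e_{i,j}^-$ weight $-i$ and $e_{i,j}^+$ weight $+i$, this already forces $\langle x,e_{i,j}^+\rangle =\langle e_{i,j}^-,y\rangle =0$ and $\langle e_{i,j}^-,e_{k,l}^+\rangle =0$ unless $i=k$. Choosing the self-duality $U_{\mathrm {f.d.}}\simeq \Hom (U_{\mathrm {f.d.}},\C )$ diagonally with respect to the countably many copies of each $n\Lambda _1$ makes the pairing diagonal in the copy index as well, giving $\langle e_{i,j}^-,e_{k,l}^+\rangle =0$ for $(i,j)\neq (k,l)$. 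Within a single copy the extreme weight vectors $v_{i\Lambda _1},v^{i\Lambda _1}$ carry opposite weights $\mp i$ and pair non-trivially, so after rescaling the chosen weight vectors we may normalize $\langle e_{i,j}^-,e_{i,j}^+\rangle =4$, in agreement with the display; rescaling preserves the weights, hence the $\h $-module structures $\rho _{\h }^{\pm }$.

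The main obstacle is the careful application of the chain rule in this infinite-dimensional setting. I must confirm that the hypothesis of \cite[Theorem 3.26]{Sa3} holds for the entire infinite direct sum, namely that $U_{\mathrm {f.d.}}$ is recovered from its lowest weight vectors under the action of the degree-one part $\C x$ — this is exactly where Proposition \ref{pr;sl2lowestweightvector} is indispensable — and that the grading produced over $\h $ is the expected one. The latter is delicate because this grading is \emph{not} a linear combination of the original $\sl _2$-grading and the $\h $-weight: it records the height above the lowest weight within each irreducible summand, placing $x^kv_{n\Lambda _1}$ in degree $k+1$, so that two weight vectors of equal weight lying in summands $n\Lambda _1$ and $n'\Lambda _1$ generally occupy different degrees. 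Once the chain rule is in force, pinning down the uniform constant $4$ is routine, following from $K_{\sl _2}(h,h)=8$ together with the freedom to rescale the vectors $e_{i,j}^{\pm }$.
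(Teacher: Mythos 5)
Your proposal is correct and follows essentially the same route as the paper: realize $\sl _2$ as $L(\C h,\ad ,\C x,\C y,K_{\sl _2}\mid _{\C h})$, identify each $V(n+1)$ and its dual with the positive/negative extensions generated by the lowest/highest weight vectors (the paper cites \cite[Theorems 3.12, 3.14, 3.17]{Sa3} where you invoke Proposition \ref{pr;sl2lowestweightvector}), and conclude by the chain rule \cite[Theorem 3.26]{Sa3}. Your additional explicit verifications --- standardness of the base pentad, the computation $\langle x,y\rangle =4$ from $K_{\sl _2}(h,h)=8$, and the weight argument pinning down the pairing --- are details the paper leaves to its citations, not a different method.
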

\begin{proof}
It is easy to show that this pentad is standard.
Indeed, the condition 1 in Definition \ref{defn;stap} is immediate.
The condition 2 follows from \cite [Lemma 2.3]{Sa3}.
\par 
The Lie algebra $\sl _2$ is obtained by a standard pentad
$$
\sl _2 \simeq L(\C h, \ad ,\C x,\C y, K_{\sl _2}\mid _{\C h })
%\simeq L\left (1,1;\begin{pmatrix}\frac{1}{8}\end{pmatrix},\begin{pmatrix}2\end{pmatrix},\begin{pmatrix}4\end{pmatrix}\right )
$$
(see \cite [Example 3.14]{Sa5}).
Under this isomorphism, for any $n\in \N\cup \{0\}$, the representation $(n\Lambda _1, V(n+1))$ and its dual $(n\Lambda _1, \Hom (V(n+1),\C))\simeq (n\Lambda _1, V(n+1))$ are respectively isomorphic to the positive and negative extensions (the notion of positive/negative extension is defined in \cite [Theorems 3.12, 3.14]{Sa3})
$$
V(n+1)\simeq \widetilde{\left ( \C v_{n\Lambda_1} \right )}^+ ,
\qquad 
\Hom (V(n+1),\C)\simeq \widetilde{\left ( \C v^{n\Lambda_1} \right )} ^-
$$
with respect to the pentad $(\C h, \ad ,\C x,\C y, K_{\sl _2}\mid _{\C h })$ (see \cite [Theorem 3.17]{Sa3}).
Thus, applying \cite [Theorem 3.26]{Sa3}, we  have our result.
\end{proof}
A PC Lie algebra can be expressed by matrices.
Similarly, the Lie algebra $L(\sl _{2},\mathrm {f.d.})\simeq (\ref {eq;sl2_univ_0})$ can be also expressed by matrices as follows.
\begin{defn}
We define a lexicographical order $\prec$ on $\{-1\}\cup (\N\cup \{0\})^2$ as:
\begin{align*}
&-1\prec \alpha \quad \text {for all $\alpha \in (\N\cup \{0\})^2$},&
\\
&(\alpha _1,\alpha _2)\prec (\beta _1,\beta _2) \text { if and only if }
\begin{cases}
&
\alpha _1<\beta _1
\\
&\qquad \text {or}
\\
&
\alpha _1=\beta _1\text{ and }\alpha _2<\beta _2
\end{cases}
\quad \text {for all $\alpha _i,\beta _i\in \N\cup \{0\}$}.
&
\end{align*}
\end{defn}

\begin{defn}
We put matrices
\begin{align*}
&\widetilde {A}=\begin{pmatrix}\frac{1}{8}\end{pmatrix}_{1\times 1},&
\\
&\widetilde {D}=
\left (\begin{array}{c|c|c|c|c|c}{2}&\begin{array}{cc}0&\cdots \end{array}&\begin{array}{cc}-1&\cdots \end{array}&\cdots &\begin{array}{cc}-n&\cdots \end{array}&\cdots\end{array}
\right )
_{1\times (\{-1\}\cup(\N\cup \{0\})^2)},&
\\
&\widetilde {\Gamma }=\begin{pmatrix}4\delta _{ij}\end{pmatrix}_{i,j\in \{-1\}\cup (\N \cup \{0\})^2}
=
\left (
\begin{array}{c|c|c|c}
4&\begin{array}{cc}0&\cdots \end{array}&\begin{array}{cc}0&\cdots \end{array}&\cdots 
\\ \hline 
\begin{array}{c}0\\ \vdots \end{array}&\begin{array}{cc}4&\\&\ddots  \end{array}&\begin{array}{cc}0&\cdots \\ \vdots &\ddots  \end{array}&\cdots 
\\ \hline
\begin{array}{c}0\\ \vdots \end{array}&\begin{array}{cc}0&\cdots \\ \vdots &\ddots  \end{array}&\begin{array}{cc}4&\\&\ddots  \end{array}&\cdots 
\\ \hline 
\vdots &\vdots &\vdots &\ddots 
\end{array}
\right )_{(\{-1\}\cup (\N \cup \{0\})^2)\times (\{-1\}\cup (\N \cup \{0\})^2)}
,&
\end{align*}
That is, 
\begin{itemize}
\item {$\widetilde {A}$ is a $1\times 1$-matrix with entry $1/8$},
\item {$\widetilde {D}=(\widetilde {d}_{1,\alpha })_{1\times (\{-1\}\cup(\N\cup \{0\})^2)}$ is a $1\times (\{-1\}\cup(\N\cup \{0\})^2)$-matrix whose entries are given as
$$
\widetilde {d}_{1,\alpha }
=
\begin{cases}
2& (\alpha =-1)
\\
-i& (\alpha =(i,j),\ i,j\in \N\cup \{0\})
\end{cases}
$$ 
and ones are aligned as the lexicographical order $\prec$},
\item {$\widetilde {\Gamma }$ is $4$ multiple of the $\text {``unit matrix''}$ with order $\{-1\}\cup(\N\cup \{0\})^2$}.
\end{itemize}
\end{defn}
\begin{defn}
We put a matrix $\widetilde {C}$ as
\begin{align*}
&
\widetilde {C}=(\widetilde {C}_{\alpha ,\beta })_{\left (\{-1\}\cup \left(\N\cup \{0\}\right )^2\right )\times \left (\{-1\}\cup \left(\N\cup \{0\}\right )^2\right )}
=\widetilde {\Gamma }\cdot {}^t \widetilde {D}\cdot \widetilde {A}\cdot \widetilde {D}
\\
&
=\left (
\begin{array}{c|c|c|c}
4&\begin{array}{cc}0&\cdots \end{array}&\begin{array}{cc}0&\cdots \end{array}&\cdots 
\\ \hline 
\begin{array}{c}0\\ \vdots \end{array}&\begin{array}{cc}4&\\&\ddots  \end{array}&\begin{array}{cc}0&\cdots \\ \vdots &\ddots  \end{array}&\cdots 
\\ \hline
\begin{array}{c}0\\ \vdots \end{array}&\begin{array}{cc}0&\cdots \\ \vdots &\ddots  \end{array}&\begin{array}{cc}4&\\&\ddots  \end{array}&\cdots 
\\ \hline 
\vdots &\vdots &\vdots &\ddots 
\end{array}
\right )
\left (\begin{array}{c}2\\ \hline \begin{array}{c}0\\ \vdots \end{array}\\ \hline \begin{array}{c}-1\\ \vdots \end{array}\\ \hline \vdots \\ \hline \begin{array}{c}-n\\ \vdots \end{array}\\ \hline \vdots\end{array}\right )
\begin{pmatrix}
\frac{1}{8}
\end{pmatrix}
\left (\begin{array}{c|c|c|c|c}2&\begin{array}{cc}0&\cdots \end{array}&\cdots &\begin{array}{cc}-n&\cdots \end{array}&\cdots\end{array}\right )
\\
&\quad =
\left (
\begin{array}{c|c|c|c|c|c}
2&\begin{array}{cc}0&\cdots \end{array}&\begin{array}{cc}-1&\cdots \end{array}&\cdots &\begin{array}{cc}-n&\cdots \end{array}&\cdots
\\ \hline 
\begin{array}{c}0\\ \vdots \end{array}&\begin{array}{cc}0&\cdots \\ \vdots &\ddots \end{array}&\begin{array}{cc}0&\cdots \\ \vdots &\ddots \end{array}&\cdots &\begin{array}{cc}0&\cdots \\ \vdots &\ddots \end{array}&\cdots 
\\ \hline
\begin{array}{c}-1\\ \vdots \end{array}&\begin{array}{cc}0&\cdots \\ \vdots &\ddots \end{array}&\begin{array}{cc}\frac{1}{2}&\cdots \\ \vdots &\ddots \end{array}&\cdots &\begin{array}{cc}\frac{n}{2}&\cdots \\ \vdots &\ddots \end{array}&\cdots 
\\ \hline
\vdots &\vdots &\vdots&\ddots &\vdots &\ddots 
\end{array}
\right )_{\left (\{-1\}\cup \left(\N\cup \{0\}\right )^2\right )\times \left (\{-1\}\cup \left(\N\cup \{0\}\right )^2\right )}
&
%\widetilde {C}^{\prime }=(\widetilde {C}_{\alpha ,1 }^{\prime })_{\left (\{-1\}\cup \left(\N\cup \{0\}\right )^2\right )\times 1}
%=\widetilde {\Gamma }\cdot {}^t \widetilde {D}\cdot A
%\\
%&
%=\left (
%\begin{array}{c|c|c|c}
%4&\begin{array}{cc}0&\cdots \end{array}&\begin{array}{cc}0&\cdots \end{array}&\cdots 
%\\ \hline 
%\begin{array}{c}0\\ \vdots \end{array}&\begin{array}{cc}4&\\&\ddots  \end{array}&\begin{array}{cc}0&\cdots \\ \vdots &\ddots  \end{array}&\cdots 
%\\ \hline
%\begin{array}{c}0\\ \vdots \end{array}&\begin{array}{cc}0&\cdots \\ \vdots &\ddots  \end{array}&\begin{array}{cc}4&\\&\ddots  \end{array}&\cdots 
%\\ \hline 
%\vdots &\vdots &\vdots &\ddots 
%\end{array}
%\right )
%\left (\begin{array}{c}2\\ \hline \begin{array}{c}0\\ \vdots \end{array}\\ \hline \begin{array}{c}-1\\ \vdots \end{array}\\ \hline \vdots \\ \hline \begin{array}{c}-n\\ \vdots \end{array}\\ \hline \vdots\end{array}\right )
%\begin{pmatrix}
%\frac{1}{8}
%\end{pmatrix}
%=
%\left (\begin{array}{c}1\\ \hline \begin{array}{c}0\\ \vdots \end{array}\\ \hline \begin{array}{c}\frac{-1}{2}\\ \vdots \end{array}\\ \hline \vdots \\ \hline \begin{array}{c}\frac{-n}{2}\\ \vdots \end{array}\\ \hline \vdots\end{array}\right )_{\left (\{-1\}\cup \left(\N\cup \{0\}\right )^2\right )\times 1}.
\end{align*}
(the multiple of matrices is defined canonically).
\end{defn}
That is, the matrix $\widetilde {C}=(\widetilde {C}_{\alpha ,\beta })$ is a $\left (\{-1\}\cup \left(\N\cup \{0\}\right )^2\right )\times \left (\{-1\}\cup \left(\N\cup \{0\}\right )^2\right )$-matrix with entries 
\begin{align*}
&
\widetilde {C}_{\alpha ,\beta }
=
\begin{cases}
2&(\alpha =\beta =-1)
\\
-i&\left ((\alpha ,\beta )=(-1,(i,j)), \ ((i,j),-1)\right )\quad \left (i,j\in \N\cup \{0\}\right )
\\
\frac{1}{2}ik&\left ((\alpha ,\beta )=((i,j),(k,l))\right )\quad (i,j,k,l\in \N\cup \{0\})
\end{cases}.
\end{align*}
%the matrix $\widetilde {C}^{\prime }=(\widetilde {C}_{\alpha ,1 }^{\prime })$ is a $\left (\{-1\}\cup \left(\N\cup \{0\}\right )^2\right )\times 1$-matrix whose entry is 
%\begin{align*}
%&
%\widetilde {C}_{\alpha ,1 }^{\prime}
%=
%\begin{cases}
%1&(\alpha =-1)
%\\
%\frac{-i}{2}&(\alpha =(i,j))\quad (i,j\in \N\cup \{0\})
%\end{cases}.
%\end{align*}
\begin{theo}\label{theo;sl2_univ_9}
Under the notations of Definition \ref{defn;Chrep}, the Lie algebra (\ref{eq;sl2_univ_0}) has a minimal graded Lie algebra structure $(\ref {eq;sl2_univ_0})\simeq \bigoplus _{p\in \Z}\l_p$ with local part
%$$
%L\left (\C h, \ad \oplus \rho _{\h}^{-}, \left (\C x \oplus \bigoplus _{i,j\in \N \cup \{0\}}\C e_{i,j}^-\right ), \left (\C y\oplus \bigoplus _{i,j\in \N \cup \{0\}}\C e_{i,j}^+\right ), K_{\sl _2}\mid _{\C h }\right ).
%$$
\begin{align*}
&\l_{1}\simeq \C x \oplus \bigoplus _{i,j\in \N \cup \{0\}}\C e_{i,j}^-,&
&\l_0\simeq \C h,&
&\l_{-1}\simeq \C y\oplus \bigoplus _{i,j\in \N \cup \{0\}}\C e_{i,j}^+,&
\\
&[h,y]=-2y,&
&[h,x]=2x,&
&[x,y]=h,&
\\
&[h,e_{i,j}^-]=-ie_{i,j}^-,&
&[h,e_{i,j}^+]=ie_{i,j}^+,&
&&
\\
&[y,e_{i,j}^-]=0,&
&[x,e_{i,j}^+]=0,&
&[e_{i,j}^-,e_{k,l}^+]=\frac{-i}{2}\delta _{(i,j),(k,l)}h&
\end{align*}
for $i,j,k,l\in \N\cup \{0\}$.
\end{theo}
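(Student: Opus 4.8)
The plan is to recognize the entire statement as an explicit unpacking of Theorem \ref{theo;liepen} applied to the particular standard pentad appearing in (\ref{eq;sl2_univ_0}). First I would record that the pentad
\[
\left(\C h,\ \ad\oplus\rho_{\h}^{-},\ \C x\oplus\bigoplus_{i,j}\C e_{i,j}^{-},\ \C y\oplus\bigoplus_{i,j}\C e_{i,j}^{+},\ K_{\sl_2}\mid_{\C h}\right)
\]
was already shown to be standard in the proof of Proposition \ref{pr;sl_2_univ_0}. Then Theorem \ref{theo;liepen} immediately supplies a graded Lie algebra $\bigoplus_{p\in\Z}\l_p$, minimal in the sense of Kac, with $\l_0\simeq\C h$ as Lie algebras, with $\l_1\simeq\C x\oplus\bigoplus_{i,j}\C e_{i,j}^{-}$ and $\l_{-1}\simeq\C y\oplus\bigoplus_{i,j}\C e_{i,j}^{+}$ as $\l_0$-modules, and with the restricted bracket $[\cdot,\cdot]\mid_{\l_1\times\l_{-1}}$ equal to the $\Phi$-map of the pentad. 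The remaining task is purely to check that each bracket listed in the theorem agrees with these identifications.

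The brackets involving $h$ are read off directly from the module structures. The action of $\C h$ on $\l_1$ is $\ad\oplus\rho_{\h}^{-}$, giving $[h,x]=2x$ and $[h,e_{i,j}^-]=-i\,e_{i,j}^-$; the action on $\l_{-1}\subset\Hom(\l_1,\C)$ is the contragredient one, so the identity $\langle\rho(h\otimes v),\phi\rangle=-\langle v,\varrho(h\otimes\phi)\rangle$ from Definition \ref{defn;stap} yields $[h,y]=-2y$ and $[h,e_{i,j}^+]=i\,e_{i,j}^+$.

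For the brackets in $[\l_1,\l_{-1}]$ I would invoke the defining identity $B(h,\Phi_\rho(v\otimes\phi))=\langle\rho(h\otimes v),\phi\rangle$ of the $\Phi$-map, observing that $\g=\C h$ is one-dimensional so every $\Phi$-value is a scalar multiple of $h$, and that this scalar is pinned down by the single number $B(h,h)=K_{\sl_2}(h,h)=4\tr(h^2)=8$. Feeding in the prescribed pairing values $\langle x,y\rangle=4$, $\langle e_{i,j}^-,e_{k,l}^+\rangle=4\delta_{(i,j),(k,l)}$, and $\langle x,e_{i,j}^+\rangle=\langle e_{i,j}^-,y\rangle=0$ then produces $[x,y]=h$, $[e_{i,j}^-,e_{k,l}^+]=\tfrac{-i}{2}\delta_{(i,j),(k,l)}h$, and $[x,e_{i,j}^+]=[e_{i,j}^-,y]=0$, exactly as stated. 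Minimality of the resulting graded Lie algebra is inherited verbatim from Theorem \ref{theo;liepen}, so it requires no separate argument.

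Since the claim is essentially an explicit reading of an already-established construction, there is no genuine obstacle here; the only point demanding care is the bookkeeping of the numerical factors. One must combine $B(h,h)=8$ with the normalization $\langle x,y\rangle=4$ of the pairing so as to reproduce precisely the coefficient $\tfrac{-i}{2}$ and the clean relation $[x,y]=h$, which is what makes the local part literally that of $\sl_2$ together with its weight vectors $e_{i,j}^{\pm}$.
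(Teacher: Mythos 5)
Your proposal is correct and follows essentially the same route as the paper: the brackets with $h$ and the minimality come for free from the construction of $L(\g,\rho,V,\V,B)$, and the degree--$(1,-1)$ brackets are pinned down by the defining identity of the $\Phi$-map, using non-degeneracy of $K_{\sl_2}\mid_{\C h}$ with $K_{\sl_2}(h,h)=8$ together with the pairing normalizations $\langle x,y\rangle=4$, $\langle e_{i,j}^-,e_{k,l}^+\rangle=4\delta_{(i,j),(k,l)}$. The only cosmetic difference is that the paper declares the $h$-brackets and $[x,y]=h$ as already given by the local structure of Proposition \ref{pr;sl_2_univ_0} and verifies only $[y,e_{i,j}^-]=[x,e_{i,j}^+]=0$ and $[e_{i,j}^-,e_{k,l}^+]=\tfrac{-i}{2}\delta_{(i,j),(k,l)}h$, which you verify by the identical computation.
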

\begin{proof}
It suffices to check the equations 
\begin{align}
&[y,e_{i,j}^-]=[x,e_{i,j}^+]=0,\label {eq;bracket_univ_1}&
\\
&[e_{i,j}^-,e_{k,l}^+]=\frac{-i}{2}\delta _{(i,j),(k,l)}h.\label {eq;bracket_univ_2}&
\end{align}
Let us check the equation (\ref {eq;bracket_univ_1}).
From the definition of Lie algebras associated with a standard pentad, the elements 
$
[y,e_{i,j}^-],
[x,e_{i,j}^+]
$
are elements of $\l_0\simeq \C h$ satisfying 
$$
K\mid _{\sl _2}(h,[y,e_{i,j}^-])=\langle e_{i,j}^-,[h,y]\rangle =-2\langle  e_{i,j}^-,y\rangle ,
\qquad 
K\mid _{\sl _2}(h,[x,e_{i,j}^+])=\langle [h,x], e_{i,j}^+\rangle =2\langle x, e_{i,j}^+\rangle
$$
respectively.
Since $\langle e_{i,j}^-,y\rangle =\langle x,e_{i,j}^+\rangle =0$ (see Proposition \ref {pr;sl_2_univ_0}), we have the equation (\ref {eq;bracket_univ_1}).
\par 
The equation (\ref {eq;bracket_univ_2}) can be checked by a similar argument to one of (\ref {eq;bracket_univ_1}).
For any $i,j,k,l\in \N\cup \{0\}$, the element $[e_{i,j}^-,e_{k,l}^+]$ satisfies 
$$
K\mid _{\sl _2}(h,[e_{i,j}^-,e_{k,l}^+])=\langle [h,e_{i,j}^-],e_{k,l}^+\rangle =-i\langle e_{i,j}^-,e_{k,l}^+\rangle =-4i\delta _{(i,j),(k,l)}.
$$
Since 
$
K\mid _{\sl _2}(h,h)=8
$
and $[e_{i,j}^-,e_{k,l}^+]\in \C h$,
we have our claim.
\end{proof}
\begin{remark}
From (\ref {sl2fd_1}) and Theorems \ref {theo;sl2_univ_9}, we have two gradings of $L(\sl _{2},\mathrm {f.d.})$.
Although we have isomorphisms
$$
L(\sl _{2},\mathrm {f.d.})\simeq \bigoplus _{n\in \Z}L_n\simeq \bigoplus _{p\in \Z}\l_p\simeq L(\sl _{2},\mathrm {f.d.})
$$
as Lie algebras up to gradings, we have
$$
L(\sl _{2},\mathrm {f.d.})\simeq \bigoplus _{n\in \Z}L_n\not\simeq \bigoplus _{p\in \Z}\l_p\simeq L(\sl _{2},\mathrm {f.d.})
$$
as graded Lie algebras.
\end{remark}
As a corollary of Theorem \ref {theo;sl2_univ_9}, we have the following immediately.
\begin{theo}
Let $(M,\prec )$ be a non-empty finite subset of the ordered set $\left (\{-1\}\cup \left (\N \cup \{0\}\right )^2,\prec \right )$.
Let $\widetilde {C}^M=(\widetilde {C}^M_{\alpha, \beta })_{\alpha, \beta \in M}$ be the minor matrix of $\widetilde {C}$ corresponding to $M$.
Then the graded Lie algebra $L(\sl _2, \mathrm {f.d.})\simeq \bigoplus _{p\in \Z }\l _p$ with the grading defined in Theorem \ref{theo;sl2_univ_9} has a graded subalgebra $\l^M$ which is isomorphic to the reduced contragredient Lie algebra with Cartan matrix $\widetilde {C}^M$.
\end{theo}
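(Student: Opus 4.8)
The plan is to realize $\l^M$ as the graded subalgebra of $\bigoplus_{p\in\Z}\l_p$ generated by the Chevalley generators attached to the indices in $M$, and then to identify it with $G'(\widetilde{C}^M)$ by matching local parts and exploiting minimality. Concretely, under the identification of Theorem \ref{theo;sl2_univ_9} I set, for $\alpha=-1$, $e_\alpha=x$ and $f_\alpha=y$, and for $\alpha=(i,j)\in(\N\cup\{0\})^2$, $e_\alpha=e_{i,j}^-$ and $f_\alpha=e_{i,j}^+$, and I write $h_\alpha=[e_\alpha,f_\alpha]$. Since all these generators are homogeneous of degree $\pm1$, the subalgebra $\l^M$ they generate is graded, $\l^M=\bigoplus_{p}\l^M_p$, with local part $\l^M_{-1}=\Span\{f_\alpha\mid\alpha\in M\}$, $\l^M_0=\Span\{h_\alpha\mid\alpha\in M\}$, $\l^M_1=\Span\{e_\alpha\mid\alpha\in M\}$.

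First I would read off from Theorem \ref{theo;sl2_univ_9} that this local part is exactly the local Lie algebra attached to the matrix $\widetilde{C}^M$: one has $[e_\alpha,f_\beta]=\delta_{\alpha\beta}h_\alpha$, $[h_\alpha,e_\beta]=\widetilde{C}^M_{\alpha\beta}e_\beta$, $[h_\alpha,f_\beta]=-\widetilde{C}^M_{\alpha\beta}f_\beta$ and $[h_\alpha,h_\beta]=0$ for $\alpha,\beta\in M$, the off-diagonal brackets $[e_\alpha,f_\beta]$ with $\alpha\neq\beta$ vanishing by the relations $[x,e_{i,j}^+]=[y,e_{i,j}^-]=0$ and $[e_{i,j}^-,e_{k,l}^+]=\frac{-i}{2}\delta_{(i,j),(k,l)}h$. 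Because every $h_\alpha$ is a multiple of $h$, one also checks $\dim\l^M_0=\rank\widetilde{C}^M$, which is the dimension of the Cartan subalgebra of the reduced contragredient Lie algebra $G'(\widetilde{C}^M)$; this consistency is what makes $G'$ (rather than $G$) the correct target.

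Since $G'(\widetilde{C}^M)$ is by definition the minimal (reduced) graded Lie algebra with this local part and $\l^M$ is generated by the same local part, the universal property of minimal graded Lie algebras (\cite[p.1276, Definition 6]{Kac2}) yields a graded epimorphism $\l^M\twoheadrightarrow G'(\widetilde{C}^M)$ that is the identity on the local part; it remains to prove it is injective, i.e. that $\l^M$ has no nonzero graded ideal meeting its local part trivially. I would deduce this from the minimality of the ambient algebra $\bigoplus_p\l_p\simeq L(\sl_2,\mathrm{f.d.})$ established in Theorem \ref{theo;sl2_univ_9}.

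The main obstacle is precisely this injectivity, and the reason it needs care is that $L(\sl_2,\mathrm{f.d.})$ is \emph{not} transitive: the generators $e_{0,j}^-,e_{0,j}^+$ coming from the trivial summands $0\Lambda_1$ produce the zero rows and columns of $\widetilde{C}$ and satisfy $[e_{0,j}^-,\l_{-1}]=[e_{0,j}^+,\l_1]=0$, so one cannot simply cite transitivity. Instead I would argue by localization of the defining ideal. An element of the positive part of $\l^M$ is a bracket of the $e_\alpha$ with $\alpha\in M$, and since $[e_\alpha,f_\beta]=\delta_{\alpha\beta}h_\alpha=0$ for $\alpha\in M$ and $\beta\notin M$, an induction via the Jacobi identity shows that $\ad(f_\beta)$ annihilates the entire positive part of $\l^M$ for every $\beta\notin M$. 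Consequently, whether such an element lies in the maximal graded ideal trivially meeting the local part of $L(\sl_2,\mathrm{f.d.})$ depends only on its repeated brackets with the $f_\alpha$, $\alpha\in M$, and these return values in $\Span\{h_\alpha\mid\alpha\in M\}$, which is exactly the data defining the maximal ideal of $G'(\widetilde{C}^M)$. Thus the defining ideal of the big algebra restricts, on the subalgebra generated by $M$, to the defining ideal of $G'(\widetilde{C}^M)$; the same argument for the negative part, together with the already-matched Cartan parts, forces the epimorphism to be an isomorphism. Making this ideal-restriction step precise, and in particular tracking the shared failure of transitivity at the trivial-representation indices so that it cancels on both sides, is the crux of the proof.
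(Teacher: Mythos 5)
Your construction coincides with the paper's own proof in all essentials: the same Chevalley generators ($e_{-1}=x$, $f_{-1}=y$, $e_{(i,j)}=e^-_{i,j}$, $f_{(i,j)}=e^+_{i,j}$), the same graded subalgebra they generate, the same vanishing lemma ($[e_\alpha,f_\beta]=0$ for $\alpha\in M$, $\beta\notin M$, propagated through bracket monomials by the Jacobi identity), and the same strategy of identifying $\l^M$ with the minimal graded Lie algebra over the local part $\widehat{G^M}/Z(\widetilde{C}^M)$. Where you depart from the paper is the injectivity step, which you present as an unresolved crux requiring a new ``ideal localization'' argument; but the obstacle you describe is not actually there. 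The failure of transitivity you correctly observe (the elements $e^{\pm}_{0,j}$ commute with all of $\l_{\mp 1}$) occurs only in degrees $\pm 1$, and degree-$\pm 1$ elements can never obstruct minimality: a graded ideal witnessing non-minimality must avoid the \emph{entire} local part, so all of its nonzero homogeneous components sit in degrees $|p|\geq 2$, and in that range every Lie algebra associated with a standard pentad \emph{is} transitive by the general construction --- this is precisely what the paper cites from \cite{Sa3}. Granted that, your own vanishing lemma closes the argument at once: if $X\in \l^M_p$ with $p\geq 2$ satisfies $[X,f_\alpha]=0$ for all $\alpha \in M$, then, since $\ad (f_\beta)$ kills $X$ for every $\beta\notin M$ as well, $X$ commutes with the whole ambient $\l_{-1}$, hence $X=0$ by transitivity in degree $p\geq 2$; so the epimorphism $\l^M\rightarrow G^{\prime}(\widetilde{C}^M)$ furnished by the universal property is injective. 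The ideal-restriction machinery you sketch is a workable idea, but it amounts to re-proving this transitivity statement, and you left it incomplete at exactly the point where a citation suffices.

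Two further remarks. First, your identification of the local part of $\l^M$ with that of $G^{\prime}(\widetilde{C}^M)$ needs more than the rank count: you must show that the kernel of the natural surjection $\widehat{G^M}\rightarrow \l^M_{-1}\oplus \l^M_0\oplus \l^M_1$ equals the center $Z(\widetilde{C}^M)$ (in the sense of \cite[p.1280, Lemma 1]{Kac2}), not merely that it has the complementary dimension. This is true --- both subspaces are cut out by the single linear equation $2s_{-1}-\sum_{(i,j)\in M}i\,s_{(i,j)}=0$ (with $s_{-1}=0$ when $-1\notin M$) --- and the paper verifies it by computing the kernel of its map $\phi$ explicitly. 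Second, your observation about the indices $(0,j)$ does expose a genuine imprecision in the paper's wording: its non-minimality witness, ``a non-zero $X$ generated by $\{e^-_\alpha\}_{\alpha\in M}$ with $[X,e^+_\alpha]=0$ for all $\alpha\in M$'', must be taken of degree at least $2$, since for $(0,j)\in M$ the degree-one generator $e^-_{0,j}$ itself satisfies all of these equations. So your instinct that this point needed care was sound; the cure is just much milder than you thought.
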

\begin{proof}
First, let us recall how to construct the (reduced) contragredient Lie algebra from a given matrix (see \cite [pp.1279--1280]{Kac2}).
%Here, we take the matrix $\widetilde {C}^M=(\widetilde {C}^M_{\alpha, \beta })_{\alpha, \beta \in M}$.
To obtain the contragredient Lie algebra $G(\widetilde {C}^M)$, we take symbols $\{f_{\alpha },h_{\alpha }, e_{\alpha }\}_{\alpha \in M}$ and put
\begin{align*}
&
G^M_{1}\simeq \bigoplus _{\alpha \in M}\C e_{\alpha },
&
&
G^M_0\simeq \bigoplus _{\alpha \in M}\C h_{\alpha },
&
&
G^M_{-1}\simeq \bigoplus _{\alpha \in M}\C f_{\alpha }.
&
\end{align*}
We put $\widehat {G^M}=G^M_{-1}\oplus G^M_{0}\oplus G^M_{1}$ and induce a local Lie algebra structure in $\widehat {G^M}$ as
\begin{align*}
&
[h_{\alpha },h_{\beta }]=0,
&
&
[e_{\alpha },f_{\beta }]=\delta _{\alpha,\beta }h_{\alpha }
&
&
[h_{\alpha },e_{\beta }]=\widetilde {C}^M_{\alpha, \beta }e_{\beta },
&
&
[h_{\alpha },f_{\beta }]=-\widetilde {C}^M_{\alpha, \beta }f_{\beta },
&
&
(\alpha, \beta \in M).
&
\end{align*} 
Then the minimal graded Lie algebra with local part $\widehat {G^M}$ is the contragredient Lie algebra $G(\widetilde {C}^M)$.
We denote the center of $G(\widetilde {C}^M)$ by $Z(\widetilde {C}^M)$.
Then the reduced contragredient Lie algebra $G^{\prime }(\widetilde {C}^M)$ is given by
$
G^{\prime }(\widetilde {C}^M)=G(\widetilde {C}^M)/Z(\widetilde {C}^M).
$
\par
To prove our claim, we shall give an injective homomorphism between $G^{\prime }(\widetilde {C}^M)$ and $L(\sl _2, \mathrm {f.d.})$.
We define a map $\phi $ between their local parts $\phi \colon \widehat {G^M}=G^M_{-1}\oplus G^M_{0}\oplus G^M_{1}\rightarrow \l_{-1}\oplus \l_0\oplus \l_1$ by:
\begin{align*}
&
f_{\alpha }
\overset {\phi }{\mapsto }
\begin{cases}
y&(\text {if }\alpha =-1)
\\
e_{i,j}^+&(\text {if }\alpha  =(i,j)\in (\N \cup \{0\})^2)
\end{cases},
&
&
h_{\alpha }
\overset {\phi }{\mapsto }
\begin{cases}
h&(\text {if }\alpha =-1)
\\
\frac{-i}{2}h&(\text {if }\alpha  =(i,j)\in (\N \cup \{0\})^2)
\end{cases},
&
\\
&
e_{\alpha }
\overset {\phi }{\mapsto }
\begin{cases}
x&(\text {if }\alpha =-1)
\\
e_{i,j}^-&(\text {if }\alpha  =(i,j)\in (\N \cup \{0\})^2)
\end{cases},
&
&
(\text {for $\alpha \in M$}).
&
\end{align*}
We can easily check that this map $\phi$ is a homomorphism of local Lie algebras.
The kernel of $\phi $ coincides with $Z(\widehat {C ^M})$.
In fact, since we can easily show that
\begin{align*}
\Ker (\phi )
=
\left \{
\sum _{\alpha \in \{-1\}\cup (\N\cup \{0\})^2}s_{\alpha }h_{\alpha }
\middle |
\begin{array}{c}
s_{\alpha }\in \C,
\\
s_{\alpha}=0\text { when $\alpha \not\in M$},
\\
s_{-1}+\sum _{(i,j)\in (\N\cup \{0\})^2}\frac{-is_{(i,j)}}{2}=0
\end{array}
\right \},
\end{align*}
we have an equation $Z(\widehat {C ^M})= \Ker (\phi )$ easily (see \cite [p.1280, Lemma 1]{Kac2}).
\par
To complete the proof, let us check that the graded subalgebra of $L\left (\sl _2, \mathrm {f.d.}\right )$ generated by $\phi (\widehat {G^M})$ is minimal.
If not, there exists a non-zero element $X\neq 0$ generated by $\{e_{\alpha }^-\}_{\alpha \in M}$ or $Y\neq 0$ generated by $\{e_{\alpha }^+\}_{\alpha \in M}$ (here, we denote $x$ and $y$ by $e_{-1}^-$ and $e_{-1}^+$ respectively) such that 
\begin{align*}
&[X,e_{\alpha }^+]=0,&
&[Y,e_{\alpha }^-]=0,&
&(\text {for any $\alpha \in M$})&
\end{align*}
respectively.
We can see that such elements do not exist from the followings:
\begin{itemize}
\item {
$[X,e_{\alpha }^+]=0$, $[Y,e_{\alpha }^-]=0$ for any $\alpha \in \{-1\}\cup (\N \cup \{0\})^2\setminus M$ from equations (\ref {eq;bracket_univ_1}), (\ref {eq;bracket_univ_2}) and the assumption that $X$, $Y$ are generated by elements $e_{\alpha }^-$, $e_{\alpha }^+$ ($\alpha \in M$) respectively,
} 
\item {
in general, a Lie algebra associated with a standard pentad has the transitivity for $|p|\geq 2$ (see \cite [Definitions 2.9, 2.12, the general construction of Lie algebras associated with a standard pentad]{Sa3}), and thus, the graded Lie algebra $L\left (\sl _2, \mathrm {f.d.}\right )\simeq \bigoplus _{p\in \Z}\l_p$ has transitivity.
}
\end{itemize}
\par 
Thus, the graded subalgebra of $L\left (\sl _2, \mathrm {f.d.}\right )\simeq \bigoplus _{p\in \Z}\l_p$ generated by $\phi (\widehat {G^M})$ is minimal and has local part which is isomorphic to $\widehat {G^M}/\Ker (\phi )\simeq \widehat {G^M}/Z(\widehat {C ^M})$.
It means that the graded Lie algebra $L\left (\sl _2, \mathrm {f.d.}\right )\simeq \bigoplus _{p\in \Z}\l_p$ has a subalgebra which is isomorphic to $G^{\prime }(\widetilde {C}^M)$.
This completes the proof.
\end{proof}
Using the general theory of standard pentads, we can construct a representation of $L\left (\sl _2, \mathrm {f.d.}\right )$ from any finite-dimensional representation of $\sl _2$.
Moreover, from $L\left (\sl _2, \mathrm {f.d.}\right )$ and such a representation, we can construct a Lie algebra.
However, it is NOT ``LARGER'' than $L\left (\sl _2, \mathrm {f.d.}\right )$.
\begin{theo}\label {th;sl2fd_closed}
Take an arbitrary finite-dimensional representation $(\sigma, U)$ of $\sl _2$.
Then the Lie algebra $L\left (\sl _2, \mathrm {f.d.}\right )=\bigoplus _{n\in \Z }L _n$ with grading defined in (\ref{sl2fd_1}) has representations $(\widetilde {\sigma }^+,\widetilde {U}^+=\bigoplus _{m\geq 0}U_m^+)$, $(\widetilde {\sigma }^-,\widetilde {U}^-=\bigoplus _{m\leq 0}U_m^-)$ such that
\begin{itemize}
\item {$(\widetilde {\sigma }^+\mid _{L_0},U_0^+)\simeq (\sigma ,U)$, $(\widetilde {\sigma }^-\mid _{L_0},U_0^-)\simeq (\sigma ,U)$ under the isomorphism $L_0\simeq \sl_2$},
\item {$(\widetilde {\sigma }^+,\widetilde {U}^+=\bigoplus _{m\geq 0}U_m^+)$, $(\widetilde {\sigma }^-,\widetilde {U}^-=\bigoplus _{m\leq 0}U_m^-)$ are transitive (we use the term ``transitive representation'' in the sense of \cite [Definition 1.3]{Sa3})},
\item {$\widetilde {U}^+$ (respectively $\widetilde {U}^-$) is generated by $L_0, L_1, U_0^+$ (respectively $L_0, L_{-1}, U_0^-$)}.
\end{itemize}
Moreover, if we denote the bilinear form on $L\left (\sl _2, \mathrm {f.d.}\right )$ which is obtained by expanding the Killing form of $\sl _2$ by $\widetilde {K}$ (see \cite [Peoposition 2.18]{Sa3}), then we have a standard pentad 
$$
\left (L\left (\sl _2, \mathrm {f.d.}\right ), \widetilde {\sigma }^+, \widetilde {U}^+,\widetilde {U}^-,\widetilde {K} \right )
$$
and an isomorphism 
\begin{align}
L\left (L\left (\sl _2, \mathrm {f.d.}\right ), \widetilde {\sigma }^+, \widetilde {U}^+,\widetilde {U}^-,\widetilde {K} \right )\simeq L\left (\sl _2, \mathrm {f.d.}\right )\label {eq;cld}
\end{align}
up to gradings.
\end{theo}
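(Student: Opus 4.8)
The plan is to realize the right-hand side of (\ref{eq;cld}) through the positive and negative extension construction of \cite{Sa3}, and then to identify it with $L(\sl_2,\mathrm{f.d.})$ by means of the chain rule together with an \emph{absorption} argument on representations. First I would produce the two representations $(\widetilde{\sigma}^+,\widetilde{U}^+)$ and $(\widetilde{\sigma}^-,\widetilde{U}^-)$ by applying \cite[Theorems 3.12, 3.14]{Sa3} to the finite-dimensional $\sl_2\simeq L_0$-module $(\sigma,U)$, taken as the degree-zero piece of a transitive module over the graded Lie algebra $L(\sl_2,\mathrm{f.d.})=\bigoplus_n L_n$. By construction the positive (respectively negative) extension is graded with non-negative (respectively non-positive) pieces, has degree-zero part isomorphic to $(\sigma,U)$, is transitive, and is generated by $L_0,L_1,U_0^+$ (respectively $L_0,L_{-1},U_0^-$). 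These are exactly the three bulleted properties, and each is an immediate consequence of the extension theorems.

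Next I would check that $(L(\sl_2,\mathrm{f.d.}),\widetilde{\sigma}^+,\widetilde{U}^+,\widetilde{U}^-,\widetilde{K})$ is a standard pentad. The bilinear form $\widetilde{K}$ obtained by expanding the Killing form of $\sl_2$ is non-degenerate and invariant on $L(\sl_2,\mathrm{f.d.})$ by \cite[Proposition 2.18]{Sa3}; the non-degeneracy of the pairing between $\widetilde{U}^+$ and $\widetilde{U}^-$ and the existence of the required $\Phi$-map are built into the extension construction, so standardness follows directly from \cite[Theorems 3.12, 3.14]{Sa3}. Note that the infinite dimensionality of the modules is not an obstruction here, since the base pentad $(\sl_2,\rho_{\mathrm{f.d.}},U_{\mathrm{f.d.}},U_{\mathrm{f.d.}},K_{\sl_2})$ is already infinite-dimensional and standard. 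This yields the existence of the associated Lie algebra on the left of (\ref{eq;cld}).

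The heart of the argument is the chain rule \cite[Theorem 3.26]{Sa3}, used exactly as in the proof of Proposition \ref{pr;sl_2_univ_0}. Applying it to the base pentad $(\sl_2,\rho_{\mathrm{f.d.}},U_{\mathrm{f.d.}},U_{\mathrm{f.d.}},K_{\sl_2})$ and to the extended module $(\sigma,U)$, I obtain, up to gradings,
\begin{align*}
&L(L(\sl_2,\mathrm{f.d.}),\widetilde{\sigma}^+,\widetilde{U}^+,\widetilde{U}^-,\widetilde{K})
\\
&\qquad\simeq L\left(\sl_2,\ \rho_{\mathrm{f.d.}}\oplus\sigma,\ U_{\mathrm{f.d.}}\oplus U,\ \Hom(U_{\mathrm{f.d.}}\oplus U,\C),\ K_{\sl_2}\right).
\end{align*}
The problem is thereby reduced to comparing the enlarged base pentad with the original one.

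Finally comes the absorption step, where the special shape of $\rho_{\mathrm{f.d.}}$ is used. By Proposition \ref{pr;sl2_fd_mod} we may write $(\sigma,U)\simeq n_1\Lambda_1\oplus\cdots\oplus n_k\Lambda_1$, a finite direct sum of irreducibles. Since $\rho_{\mathrm{f.d.}}$ is by definition the direct sum of countably many copies of each $n\Lambda_1$, adjoining the finitely many summands of $U$ merely adds finitely many copies of finitely many irreducibles to an already countable supply of each; a countable set remains countable after adjoining finitely many elements, so $\rho_{\mathrm{f.d.}}\oplus\sigma\simeq\rho_{\mathrm{f.d.}}$ and $U_{\mathrm{f.d.}}\oplus U\simeq U_{\mathrm{f.d.}}$ as $\sl_2$-modules, and likewise on the self-dual side. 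By the uniqueness in Theorem \ref{theo;liepen}, an isomorphism of standard pentads induces an isomorphism of their associated Lie algebras, so the enlarged pentad yields $L(\sl_2,\mathrm{f.d.})$ and (\ref{eq;cld}) follows. I expect the main obstacle to be bookkeeping rather than conceptual: one must ensure that the absorption isomorphism $U_{\mathrm{f.d.}}\oplus U\simeq U_{\mathrm{f.d.}}$ is compatible with the canonical pairing and the $\Phi$-map, so that it genuinely lifts to an isomorphism of pentads and not merely of underlying $\sl_2$-modules. The absorption counting itself is elementary.
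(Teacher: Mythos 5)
Your proposal is correct and takes essentially the same route as the paper's own proof: both obtain $\widetilde{\sigma}^{\pm}$ from the positive/negative extension theorems of \cite{Sa3}, reduce the left-hand side of (\ref{eq;cld}) via the chain rule \cite[Theorem 3.26]{Sa3} to the pentad $\left(\sl_2,\ \rho_{\mathrm{f.d.}}\oplus\sigma,\ U_{\mathrm{f.d.}}\oplus U,\ U_{\mathrm{f.d.}}\oplus\Hom(U,\C),\ K_{\sl_2}\right)$, and conclude by absorbing $\sigma$ into $\rho_{\mathrm{f.d.}}$ (countably many copies of each irreducible plus finitely many more is again countably many). Your closing caveat about the absorption isomorphism being compatible with the pairing and the $\Phi$-map is precisely the detail the paper compresses into the phrase ``from the definitions of $\rho_{\mathrm{f.d.}}, U_{\mathrm{f.d.}}$''.
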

\begin{proof}
The existence of the representations $\widetilde {\sigma }^+$, $\widetilde {\sigma }^-$ follows from \cite [Theorems 3.12, 3.14, 3.17]{Sa3}.
%We have our claim from \cite [Theorems 3.12, 3.14, 3.17, 3.26]{Sa3}.
%Put $\sigma =n_1\Lambda _1\oplus n_2\Lambda _1\oplus \cdots \oplus n_k\Lambda _1$ for some $n_1,\ldots ,n_k\in \N\cup \{0\}$.
%Then we have $\Hom (U,\C )\simeq n_1\Lambda _1\oplus n_2\Lambda _1\oplus \cdots \oplus n_k\Lambda _1\simeq U$.
Let us show the equation (\ref {eq;cld}).
Since we have a standard pentad
% $(\sl _2,\rho _{\mathrm {f.d.}}, U_{\mathrm {f.d.}}, U_{\mathrm {f.d.}}, K_{\sl _2})$, 
$(\sl _2, \sigma ,U,\Hom (U,\C),K_{\sl _2})$ (see \cite [Lemma 2.3]{Sa3}), we have an isomorphism of Lie algebras
\begin{align*}
&
L\left (L\left (\sl _2, \mathrm {f.d.}\right ), \widetilde {\sigma }^+, \widetilde {U}^+,\widetilde {U}^-,\widetilde {K} \right )
\simeq 
L\left (L(\sl _2,\rho _{\mathrm {f.d.}}, U_{\mathrm {f.d.}}, U_{\mathrm {f.d.}}, K_{\sl _2}), \widetilde {\sigma }^+, \widetilde {U}^+,\widetilde {U}^-,\widetilde {K} \right )
&
\\
&
\simeq L\left (L(\sl _2,\rho _{\mathrm {f.d.}}, U_{\mathrm {f.d.}}, U_{\mathrm {f.d.}}, K_{\sl _2}), \widetilde {\sigma }^+, \widetilde {U}^+,\widetilde {\Hom (U,\C)}^-,\widetilde {K} \right )\qquad (\text {from $U\simeq \Hom (U,\C)$})
&
\\
&
\simeq L\left (\sl _2, \rho _{\mathrm {f.d.}}\oplus \sigma , U_{\mathrm {f.d.}}\oplus U, U_{\mathrm {f.d.}}\oplus \Hom (U,\C), K_{\sl _2} \right )\qquad (\text {\cite [Theorem 3.26]{Sa3}})
&
\\
&
\simeq L(\sl _2,\rho _{\mathrm {f.d.}}, U_{\mathrm {f.d.}}, U_{\mathrm {f.d.}}, K_{\sl _2})\qquad (\text {from the definitions of $\rho _{\mathrm {f.d.}}, U_{\mathrm {f.d.}}$})
&
\\
&
\simeq L\left (\sl _2, \mathrm {f.d.}\right )
&
\end{align*}
up to gradings.
\end{proof}
\begin{col}
If we denote the canonical grading of $L\left (L\left (\sl _2, \mathrm {f.d.}\right ), \widetilde {\sigma }^+, \widetilde {U}^+,\widetilde {U}^-,\widetilde {K} \right )$ by 
$$
L\left (L\left (\sl _2, \mathrm {f.d.}\right ), \widetilde {\sigma }^+, \widetilde {U}^+,\widetilde {U}^-,\widetilde {K} \right )=\bigoplus _{n\in \Z}\L_n,
$$ 
this graded Lie algebra satisfies 
\begin{align*}
L\left (\sl _2, \mathrm {f.d.}\right )\simeq \L_0\subsetneq \bigoplus _{n\in \Z}\L_n\simeq L\left (\sl _2, \mathrm {f.d.}\right )
\end{align*}
from the equation (\ref {eq;cld}).
That is, the graded Lie algebra $L\left (L\left (\sl _2, \mathrm {f.d.}\right ), \widetilde {\sigma }^+, \widetilde {U}^+,\widetilde {U}^-,\widetilde {K} \right )$ is isomorphic to its proper Lie subalgebra.
\end{col}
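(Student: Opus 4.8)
The plan is to read off the conclusion from two isomorphisms already in hand, together with one elementary observation about the grading. The first isomorphism is the general one supplied by Theorem \ref{theo;liepen}: applied to the standard pentad $\left(L\left(\sl_2, \mathrm{f.d.}\right), \widetilde{\sigma}^+, \widetilde{U}^+, \widetilde{U}^-, \widetilde{K}\right)$, whose standardness is part of Theorem \ref{th;sl2fd_closed}, it identifies the degree-zero component of the associated graded Lie algebra with the base Lie algebra of the pentad, so that $\L_0 \simeq L\left(\sl_2, \mathrm{f.d.}\right)$ as Lie algebras and $\L_{\pm 1} \simeq \widetilde{U}^{\pm}$ as $\L_0$-modules. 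The second isomorphism is equation (\ref{eq;cld}) from Theorem \ref{th;sl2fd_closed}, which gives $\bigoplus_{n \in \Z} \L_n \simeq L\left(\sl_2, \mathrm{f.d.}\right)$ up to grading.

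First I would record these two isomorphisms verbatim; no further work is needed for them, since they are immediate citations of results already established. Next I would verify that the inclusion $\L_0 \hookrightarrow \bigoplus_{n \in \Z} \L_n$ is strict, which is the only step requiring a (small) argument. For this it suffices to exhibit a single nonzero homogeneous component of nonzero degree. Since the chosen representation $(\sigma, U)$ is a nonzero finite-dimensional representation of $\sl_2$, its degree-zero extension satisfies $U_0^+ \simeq U \neq 0$, whence $\widetilde{U}^+ = \bigoplus_{m \geq 0} U_m^+ \neq 0$, and therefore $\L_1 \simeq \widetilde{U}^+ \neq 0$. Consequently $\L_0$ is a \emph{proper} graded subalgebra of $\bigoplus_{n \in \Z} \L_n$.

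Concatenating the three facts gives the chain $L\left(\sl_2, \mathrm{f.d.}\right) \simeq \L_0 \subsetneq \bigoplus_{n \in \Z} \L_n \simeq L\left(\sl_2, \mathrm{f.d.}\right)$. Transporting the outer isomorphisms then exhibits $\bigoplus_{n \in \Z} \L_n \simeq L\left(L\left(\sl_2, \mathrm{f.d.}\right), \widetilde{\sigma}^+, \widetilde{U}^+, \widetilde{U}^-, \widetilde{K}\right)$ as a Lie algebra isomorphic to its own proper subalgebra $\L_0$, which is exactly the assertion.

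The only step that is not a pure citation is the strictness of the inclusion, and that is also the conceptual heart of the statement: a finite-dimensional Lie algebra can never be isomorphic to a proper subalgebra of itself, so the phenomenon rests entirely on the infinite-dimensionality of $L\left(\sl_2, \mathrm{f.d.}\right)$ (equivalently, on the presence of infinitely many summands in $U_{\mathrm{f.d.}}$). I do not anticipate any genuine obstacle; once Theorems \ref{theo;liepen} and \ref{th;sl2fd_closed} are invoked, the argument reduces to the short bookkeeping sketched above.
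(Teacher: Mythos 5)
Your proposal is correct and follows essentially the same route as the paper, which treats this corollary as an immediate consequence of equation (\ref{eq;cld}) together with the general fact (Theorem \ref{theo;liepen}) that the degree-zero component of a Lie algebra associated with a standard pentad is isomorphic to the base Lie algebra of that pentad. Your explicit verification that $\L_1\simeq \widetilde{U}^+\neq 0$ (assuming $U\neq 0$) makes the properness of the inclusion more careful than the paper's tacit treatment, but it is the same argument.
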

Concluding the above argument roughly, we can obtain the followings:
\begin{itemize}
\item {any finite-dimensional representation of $\sl _2$ is contained in the local part of the graded Lie algebra $L\left (\sl _2, \mathrm {f.d.}\right )=\bigoplus _{n\in \Z }L_n$, defined in (\ref {sl2fd_1})},
\item {the structure of $L\left (\sl _2, \mathrm {f.d.}\right )$ is expressed by the matrices $\widetilde {A},\widetilde {D}, \widetilde {\Gamma }$ and $\widetilde {C}=\widetilde {\Gamma }\cdot {}^t \widetilde {D}\cdot \widetilde{A}\cdot \widetilde {D}$},
\item {the Lie algebra $L\left (\sl _2, \mathrm {f.d.}\right )$ is ``closed'' in the sense of Theorem \ref {th;sl2fd_closed}}.
\end{itemize}

%\subsection {}
%
%\begin{theo}
%For any semisimple Lie algebra $\g $, there exists a graded Lie algebra 
%$$
%L=\bigoplus _{n\in \Z }L_n
%$$ 
%such that any finite-dimensional representation of $\g $ is contained in $L_1$.
%\end{theo}
%\begin{proof}
%Let 
%\end{proof}
%
%\begin{theo}
%uniqueness
%\end{theo}
%\begin{proof}
%Let 
%$$
%l=\bigoplus _{m\in \Z}l_m
%$$
%be another Lie algebra satisfying the conditions.
%Then, we have an isomorphism
%\begin{align*}
%&f\colon L=\bigoplus _{n\in \Z}L_n \rightarrow l\quad \left (=\bigoplus _{m\in }l_m\right )
%\\
%&
%\end{align*}
%such that 
%\begin{align*}
%f(L_1)\subset 
%\end{align*}
%\end{proof}
%
%\begin{theo}
%minimal
%\end{theo}
%\begin{proof}
%Let 
%$$
%l=\bigoplus _{m\in \Z}l_m
%$$
%be a Lie algebra satisfying the conditions.
%Then, we have an isomorphism
%\begin{align*}
%&f\colon L=\bigoplus _{n\in \Z}L_n \rightarrow l\quad \left (=\bigoplus _{m\in }l_m\right )
%\\
%&
%\end{align*}
%such that 
%\begin{align*}
%f(L_1)\subset 
%\end{align*}
%\end{proof}
%
%\begin{theo}
%maximal
%\end{theo}
%\begin{proof}
%Let 
%$$
%l=\bigoplus _{m\in \Z}l_m
%$$
%be a Lie algebra satisfying the conditions.
%Then, we have an isomorphism
%\begin{align*}
%&f\colon L=\bigoplus _{n\in \Z}L_n \rightarrow l\quad \left (=\bigoplus _{m\in }l_m\right )
%\\
%&
%\end{align*}
%such that 
%\begin{align*}
%f(L_1)\subset 
%\end{align*}
%\end{proof}
%

\end{document}